\newtheorem{theo}{Theorem}
\newtheorem{prop}[theo]{Proposition}
\newtheorem{lem}[theo]{Lemma}
\newcommand {\pare}[1] {\left( {#1} \right)}
\newcommand {\cro}[1] {\left[ {#1} \right]}
\newcommand {\acc}[1] {\left\{ {#1} \right\}}
\newcommand {\nor}[1] { \left\| {#1} \right\|}
\newcommand {\nors}[1] {\nor{#1}^{*}}
\newcommand {\bra}[1] { \left\langle {#1} \right\rangle}
\newcommand {\bras}[1] { \bra {#1}^{*}}
\newcommand {\abs}[1] {\left\lvert {#1} \right\rvert}
\newcommand {\va}[1] {\left| {#1} \right|}
\def \E {\mathbb{E}}
\def \N  {\mathbb{N}} 
\def \P {\mathbb{P}}
\def \R  {\mathbb{R}} 
\def \Z  {\mathbb{Z}}
\def \AA {{\mathcal A}}
\def \EE {{\mathcal E}}
\def \FF {{\mathcal F}}
\def \LL {{\mathcal L}}
\def \XX {{\mathcal X}}
\def \MM {{\mathcal M}}
\def \UU {{\mathcal U}}
\def \VV {{\mathcal V}}
\def \SS {{\mathcal S}} 
\newcommand {\som}[2] { \displaystyle{\sum_{#1}^{#2}}}
\newcommand {\pro}[2] {\displaystyle{\prod_{#1}^{#2}}}
\def \la {\lambda}
\def \ind {\hbox{\tt 1\hskip-.33em l}}
\def \bx {\bar{x}}
\def \bX {\bar{\XX}}
\def \bP {\bar{P}}
\def \by {\bar{y}}
\def \bz {\bar{z}}
\def \bf {\bar{f}}
\def \bR {\bar{R}}
\def \bLL {\bar{\LL}}
\def \brX {\breve{\XX}}
\def \brx {\breve{x}}
\def \bry {\breve{y}}
\def \brz {\breve{z}}
\def \brf {\breve{f}}
\def \brR {\breve{R}}
\def \Id {\mbox{Id}}
\def \Tr {\mbox{Trace}}
\def \Vol {\mbox{Vol}}
\def \Span {\mbox{Span}}
\begin{document}

\title[Random forests and intertwining]{Approximate and exact solutions\\ of intertwining equations\\ through random spanning forests}

\author{Luca Avena}
\thanks{L. A. was partially supported by NWO Gravitation Grant 024.002.003-NETWORKS}
\address{Universiteit Leiden, Niels Bohrweg 1, 2333 CA Leiden, Netherlands}
\email{l.avena@math.leidenuniv.nl}

\author{Fabienne Castell} 
\address{
 Aix-Marseille Universit\'e, CNRS, Centrale Marseille. I2M UMR CNRS 7373. 39, rue Joliot Curie. 13 453 Marseille Cedex
13. France.}
\email{fabienne.castell@univ-amu.fr}

\author{Alexandre Gaudilli\`ere} 
\address{
 Aix-Marseille Universit\'e, CNRS, Centrale Marseille. I2M UMR CNRS 7373. 39, rue Joliot Curie. 13 453 Marseille Cedex
13. France.}
\email{alexandre.gaudilliere@math.cnrs.fr} 

\author{Clothilde M\'elot} 
\address{
 Aix-Marseille Universit\'e, CNRS, Centrale Marseille. I2M UMR CNRS 7373. 39, rue Joliot Curie. 13 453 Marseille Cedex
13. France.}
\email{clothilde.melot@univ-amu.fr}

\date{\today}

\subjclass[2010]{05C81;15A15,60J28}
\keywords{Intertwining, Markov process, finite networks, multiresolution analysis, metastability, random spanning forests}

\begin{abstract}
For different reversible Markov kernels on finite state spaces,
we look for families of probability measures
for which the time evolution almost remains in their convex hull.
Motivated by signal processing problems
and metastability studies
we are interested in the case when the size of such families
is {\em smaller} than the size of the state space,
and we want such distributions to be with small overlap among them.
To this aim we introduce a {\em squeezing} function
to measure the common overlap of such families,
and we use random forests to build random approximate solutions
of the associated intertwining equations
for which we can bound from above the expected values
of both squeezing and total variation errors.
We also explain how to modify some of these approximate solutions 
into exact solutions by using those eigenvalues
of the associated Laplacian
with the largest absolute values.
\end{abstract}

\maketitle

\section{Main results, motivations and heuristic}

The aim of this work is to build exact and approximate solutions 
of certain intertwining equations between Markov kernels on finite state spaces.
The intertwining equations we look at are related to the two following problems.
First, we want to build wavelet-like multiresolution schemes
for signal processing on arbitrary weighted graphs.
Second, we want to make sense of the notion of metastability without asymptotic,
in a finite setup where no large-volume or low-temperature limits are in place.
We will partially address these problems by giving 
``good approximate solutions'' of the intertwining equations 
we are about to describe together with the structure of the paper.
We anticipate that, as far as the first problem is concerned,
the results we derive here form the main guideline
for the filtering and subsampling operations 
of a full multiresolution scheme
we derive in a forthcoming paper.

\subsection{Intertwining equations}
Given an irreducible stochastic matrix $P$,
which is associated with the generator ${\mathcal L}$
of a continuous time process $X$ on a finite state space ${\mathcal X}$
(see Section~\ref{genova} for precise definitions),
we look at solutions $(\Lambda, \bar P)$  
of the intertwining equations
\begin{equation}
\label{DF.eq}
{\Lambda} P = \bar{P} {\Lambda} ,
\end{equation}
and, for $q' > 0$,
\begin{equation}
\label{meta.eq}
{\Lambda} K_{q'} = \bar{P} {\Lambda} ,
\end{equation}
where 
\begin{itemize} 
\item $\bar{P}$ is a stochastic matrix defined on some finite state space $\bar{\XX}$;
\item ${\Lambda}: \bar{\XX} \times \XX \rightarrow [0, 1]$ is a rectangular stochastic matrix;
\end{itemize}
and 
$K_{q'}$  is the transition kernel on $\XX$ given by 
\begin{equation}
\label{Kq.def}
 K_{q'}(x,y) :=\P_x(X(T_{q'}) = y ) = q' (q' \, \Id - \LL)^{-1}(x,y) \, , 
\end{equation}
with $T_{q'}$ an exponential random variable with parameter $q'$ that is independent of~$X$.  

Solving equation \eqref{DF.eq} amounts to find a family of probability measures 
$\nu_{\bar x} = \Lambda(\bar x, \cdot)$  
on ${\mathcal X}$ and such that, for some stochastic matrix $\bar P$,
\begin{equation} \label{banzai}
	\nu_{\bar x}P = \Lambda P(\bar x, \cdot) = \bar P \Lambda(\bar x, \cdot) = \sum_{\bar y \in \bar {\mathcal X}} \bar P(\bar x, \bar y) \nu_{\bar y}
	.
\end{equation}
In other words the one step evolution of the $\nu_{\bar x}$'s
have to remain in their convex hull.
Solving Equation~\eqref{meta.eq} is the same, except that the ``one step evolution''
has now to be considered in continuous time and on time scale $1 / q'$.
In both cases a trivial solution is always given by taking all the $\nu_{\bar x}$
equal to the equilibrium measure $\mu$.

Intertwining relations, restricted to measures $\nu_{\bar x}$ with disjoint support,
appeared in the context of diffusion processes in the paper by Rogers and Pitman \cite{RP}, as 
a tool to state identities in laws. This method was later successfully applied 
to many other examples (see for instance \cite{CPY},
\cite{MY}). In the context of Markov chains, intertwining was used by Diaconis and Fill \cite{DF}
without the disjoint support restriction
to build strong stationary times and to control convergence rates to equilibrium. 
At the time being, applications of intertwining include random matrices \cite{DMDY}, particle systems \cite{War}...

Such intertwining relations have often been considered from \cite{DF} with an absorbing point
for~$\bar P$ in~$\bar{\mathcal X}$ and with size $m$ of $\bar{\mathcal X}$ being (much) larger than
or equal to the size $n$ of ${\mathcal X}$.
Motivated by signal processing and metastability problems
(see Section~\ref{violet}), in this paper 
we are instead interested in the case where
\begin{itemize}
	\item the size $m$ of $\bar{\mathcal X}$ is smaller than the size $n$ of ${\mathcal X}$,
	\item $\bar P$ is irreducible,
	\item the probability measures $\bigl(\nu_{\bar x} : \bar x \in \bar{\mathcal X}\bigr)$
		are linearly independent and have small ``joint overlap''.
\end{itemize}
We will define the {\em squeezing} of a collection of probability measures 
to control this overlap (see Section~\ref{inverno}) and a small ``joint overlap''
will correspond to little squeezed probability measures.
We will further assume that $P$ and ${\mathcal L}$ are reversible 
with respect to $\mu$ and we will see in Section~\ref{impianto}
that, for any reversible stochastic kernel $P$ 
with non-negative eigenvalues
and for any positive $m < n$,
non-degenerate solutions of Equation~\eqref{DF.eq} 
with $|\bar{\mathcal X}| = m$
always exist.
By ``non-degenerate solutions'' we mean linearly independent probability measures 
such that Equation~\eqref{banzai} holds for some irreducible~$\bar P$.
But we will argue that exact solutions tend to be squeezed solutions.
Then, rather than looking at the less squeezed solutions
in the large space of all solutions for a given $m$,
we will first consider approximate solutions with small squeezing.
To this aim we will make use of random spanning forests
to build random approximate solutions
for which we will be able to bound 
both the expected value of an error term in intertwining Equation~\eqref{DF.eq}
and  the expected value of the squeezing (Theorem~\ref{cornet}).
Then we will use the same random forests
to build random approximate solutions of Equation~\eqref{meta.eq}
with no overlap, i.e., with disjoint support (Theorem~\ref{TVmeta.theo}). 
Assuming knowledge of the $n - m$ largest eigenvalues of $-{\mathcal L}$,
we will finally see how to modify such an approximate solution of~\eqref{meta.eq}
with $m$ probability measures $\nu_{\bar x}$
into exact solutions for $q'$ small enough (Theorem~\ref{cup}).

\subsubsection*{Structure of the paper}
We fix some notation in Section~\ref{genova}
before defining the squeezing of a probability measure family in Section~\ref{inverno}.
We introduce random forests in Section~\ref{notation.sub}
to give our main results in Section~\ref{grey}.
We detail our motivations, linking signal processing and metastability studies,
in Section~\ref{violet}
and we give some heuristics in Section~\ref{pink}.
In Section~\ref{campi} we prove some preliminary results,
and we give the proofs of our three main theorems
in the three last sections.
We conclude with an appendix 
that contains the proof of the main statement
that links metastability studies with Equation~\eqref{banzai}.

\subsection{Functions, measures, Markov kernel and generator}\label{genova}
Let  $\XX$ be  a finite space with cardinality $\abs{\XX}= n$. We consider  an irreducible 
  continuous time Markov process
$(X(t), t \ge 0)$ on  $\XX$,  with generator 
 $\LL$: 
 \begin{equation}
 \label{Generateur.def}  \LL f(x) := \sum_{y \in \XX} w(x,y) (f(y)- f(x)), 
 \end{equation}
where $f: \XX \rightarrow \R$ is an arbitrary function, and $w: \XX \times \XX \rightarrow [0,+ \infty[$ gives the 
transition rates. 
For $x \in \XX$, let  
\[ w(x) := \sum_{y \in \XX \setminus \acc{x}} w(x,y) \, . 
\]
Note that $\LL$ acts on functions as the matrix, still denoted by $\LL$:
\[ \LL(x,y)=w(x,y)  \mbox { for } x \neq y \, ; \, \, \LL(x,x)=-w(x) \, . 
\]
Let $\alpha > 0$ be defined by 
\begin{equation}
\label{MaxTaux.eq}
 \alpha = \max_{x \in \XX} w(x) \, . 
\end{equation}
Hence, $P:= \LL/\alpha+\Id$ is an irreducible stochastic matrix, 
and we denote by $(\hat{X}_k, k \in \N)$ a discrete time Markov
chain with transition matrix $P$. The process $(X(t), t \ge 0)$ can be constructed from $(\hat{X}_k, k \in \N)$ and an 
independent Poisson process $(\tau_i, i > 0)$ on $\R^+$ with rate $\alpha$.
At each event of the Poisson process,  $X$ moves according to the trajectory of $\hat{X}$,
i.e., with $\tau_0 = 0$:
\[ X(t) = \sum_{i=0}^{+\infty} \hat{X}_i \ind_{\tau_i \leq t < \tau_{i+1}} \, . 
\]
We assume that $X$ is reversible with respect to the probability measure $\mu$ on $\XX$, i.e.
\begin{equation}
\label{equilibre.eq}
 \forall x, y  \in \XX, \, \, \mu(x) w(x,y) = \mu(y) w(y,x) \, . 
\end{equation} 
The process $X$ being irreducible, $\mu$ is strictly positive.
The operator $- \LL$ is self-adjoint and positive; we denote by $(\lambda_i; i=0, \cdots, n-1)$
the real eigenvalues of $-\LL$ in increasing order. It follows from  the fact that $P$ is  irreducible
 that 
\begin{equation}
\label{vp.eq}
 0 = \la_0 < \la_1 \leq \la_2 \cdots \leq \la_{n-1} \leq  2 \alpha \, .
\end{equation}

A function $f$ on $\XX$ will be seen as a column vector, whereas a signed measure on $\XX$ will be seen as a 
row vector. For $p \geq 1$,  $\ell_p(\mu)$ is the space of functions endowed with the norm 
\[ \nor{f}_p = \pare{\sum_{x \in \XX} \va{f(x)}^p  \mu(x)}^{1/p}\, .
\]
The scalar product of two functions $f$ and $g$ in $\ell_2(\mu)$ is 
\[ \bra{f,g} = \sum_{x \in \XX} f(x) g(x) \mu(x) \,  
\] 
The corresponding norm is  denoted by $\nor{\cdot}$. When $f$ is a function and $\nu$ is a signed measure, the duality bracket between $\nu$ and $f$ is 
\[ \bra{\nu | f} = \sum_{x \in \XX}  \nu(x) f(x) \, . 
\]
$\ell_p^*(\mu)$ denotes the dual space of $\ell_p(\mu)$ with respect to $\bra{\cdot | \cdot}$. It is the space 
of signed measures endowed with the norm:
\[ \nor{\nu}_p^* = \pare{\sum_{x \in \XX} \va{\frac{\nu(x)}{\mu(x)}}^{p^*} \mu(x) }^{1/p^*} \, 
\]
where $p^*$ is the conjugate exponent of $p$: $1/p + 1/p^*=1$.
$\ell_p^*(\mu)$ is identified with $\ell_{p^*}(\mu)$ through the isometry: $\nu \in \ell_p^*(\mu) \mapsto 
\nu^* \in \ell_{p^*}(\mu)$, where $\nu^*(x)= \nu(x)/\mu(x)$ is the density of $\nu$ with
respect to $\mu$. The inverse of this isometry is still denoted by $^*$. 
It associates to a function $f \in \ell_p(\mu)$, the signed measure $f^* \in \ell_{p^*}^*(\mu)$ whose density with respect to $\mu$ is $f$: 
$f^*(A)= \sum_{x \in A} \mu(x) f(x)$ for all subset $A$ of $\XX$. $\ell_2^*(\mu)$
is an Euclidean space whose scalar product is denoted by:
\[
\bras{\nu,\rho}:=\sum_{x \in \XX} \nu(x) \rho(x) \frac{1}{\mu(x)} = \bra{\nu^*,\rho^*} \, .
\]
The corresponding norm is  denoted by $\nors{\cdot}$. For $\nu \in \ell_2^*(\mu)$ and 
$f \in \ell_2(\mu)$, one gets 
\[ \bra{\nu | f} =\bra{\nu,f^*}^* = \bra{\nu^*,f} \, .
\]

\subsection{Squeezing of a collection of probability measures}\label{inverno} 
For some finite space $\bar{\mathcal X}$ of size $m < n$, let $(\nu_{\bx} : \bx \in \bX)$ be a collection of $m$ probability measures on $\XX$
which is identified with the matrix~${\Lambda}$, the row vectors of which are the $\nu_{\bar x}$'s:
$\Lambda(\bar x, \cdot) = \nu_{\bar x}$ for each $\bar x$ in $\bar{\mathcal X}$.
Since these measures form acute angles between them ($\langle \nu_{\bar x}, \nu_{\bar y}\rangle^* \geq 0$ for 
all $\bar x$ and $\bar y$ in $\bar{\mathcal X}$) and have disjoint supports if and only if they are orthogonal,
one could use the volume of the parallelepiped they form to measure their ``joint overlap''.
The square of this volume is given by the determinant of the Gram matrix:
$$
	{\rm Vol}(\Lambda) = \sqrt{\det\Gamma},
$$
with $\Gamma$ the square matrix on $\bX$ with entries
$\Gamma(\bx, \by) = \bras{\nu_{\bx},\nu_{\by}}$, that is
\begin{equation}
\label{Gamma.def}
\Gamma:= {\Lambda} D(1/\mu) {\Lambda}^t \, ,
\end{equation}
where $D(1/\mu)$ is the diagonal matrix with entries given by $(1/\mu(x), x \in \XX)$, and 
${\Lambda}^t$ is the transpose of ${\Lambda}$. 
Loosely speaking, the less overlap, the largest the volume.

We will instead use the {\em squeezing} of $\Lambda$, that we defined by
\begin{equation}
\label{Flat.eq}
\SS({\Lambda}):=\left\{\begin{array}{ccc}
&+\infty&\mbox{ if }\det(\Gamma)=0,\\
&\sqrt{\Tr\big(\Gamma^{-1}\big)} \in\,]0,+\infty[&\mbox{ otherwise,}
\end{array}\right. 
\end{equation} 
to measure this ``joint overlap''.
We call it ``squeezing'' because the $\nu_{\bar x}$ and the parallelepiped they form
are squeezed when ${\mathcal S}(\Lambda)$ is large.
This is also 
the half diameter of the rectangular parallelepiped
that circumscribes the ellipsoid defined by the Gram matrix $\Gamma$ :
this ellipsoid is squeezed too when ${\mathcal S}(\Lambda)$ is large.
We note finally that our squeezing controls the volume of $\Lambda$.
Indeed, by comparison between harmonic
and geometric mean applied to the eigenvalues of the Gram matrix, small squeezing implies large volume:
${\rm Vol}(\Lambda)^{1 / n} {\mathcal S}(\Lambda) \geq \sqrt n$.
We will also show in Section~\ref{campi}:
\begin{prop} 
\label{flatness.prop}
 Let $(\nu_{\bx}, \bx \in \bX)$ be a collection of $m$ probability measures on $\XX$. 
\begin{enumerate}
\item We have
\begin{equation} 
\label{Trace.ineq}
\SS({\Lambda}) \geq \sqrt{ \sum_{\bx \in \bX} \frac{1}{\nor{\nu_{\bx}}^{*2}}} \, . 
\end{equation} 
Equality holds if and only if  the $(\nu_{\bx}, \bx \in \bX)$ are orthogonal.
\item Assume that $\mu$ is a convex combination of the $(\nu_{\bx}, \bx \in \bX)$. 
Then,  
\[ \SS({\Lambda}) \geq 1 \, . 
\]
Equality holds  if and only if the $(\nu_{\bx}, \bx \in \bX)$ are orthogonal.
\end{enumerate}   
\end{prop}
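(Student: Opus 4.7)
For Part~1, the plan is to apply Cauchy--Schwarz coordinatewise. For each $\bx$, the identity $1 = (e_{\bx}^t e_{\bx})^2 = \pare{(\Gamma^{1/2}e_{\bx})^t (\Gamma^{-1/2}e_{\bx})}^2 \leq (e_{\bx}^t \Gamma e_{\bx})(e_{\bx}^t \Gamma^{-1} e_{\bx}) = \nors{\nu_{\bx}}^2\,(\Gamma^{-1})(\bx,\bx)$ gives $(\Gamma^{-1})(\bx,\bx) \geq 1/\nors{\nu_{\bx}}^2$, and summing over $\bx \in \bX$ yields $\SS(\Lambda)^2 = \Tr(\Gamma^{-1}) \geq \sum_{\bx} 1/\nors{\nu_{\bx}}^2$. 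The Cauchy--Schwarz equality case at a given $\bx$ is that $e_{\bx}$ is an eigenvector of $\Gamma$, so simultaneous equality over all $\bx$ forces $\Gamma$ to be diagonal, which is equivalent to pairwise orthogonality of the $(\nu_{\bx})$ in $\ell_2^*(\mu)$.

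For Part~2, the first step is to convert the convex-combination hypothesis into a linear identity on $\Gamma$. Writing $\mu = \sum_{\bx}\alpha_{\bx}\nu_{\bx}$ with $\alpha_{\bx} \geq 0$ and $\sum_{\bx}\alpha_{\bx} = 1$, and using $\bras{\mu,\nu_{\by}} = \sum_{z}\nu_{\by}(z)=1$, I obtain $1 = \sum_{\bx}\alpha_{\bx}\Gamma(\bx,\by)$ for every $\by$, i.e., $\Gamma\alpha = \mathbf{1}$, the column of ones on $\bX$. If $\Gamma$ is singular then $\SS(\Lambda)=+\infty$ and there is nothing to prove. Otherwise, a standard block-inverse computation shows that whenever $\alpha_{\bx_0}=0$ the principal submatrix $\Gamma'$ obtained by deleting the $\bx_0$ row and column satisfies $\Tr(\Gamma^{-1}) \geq \Tr(\Gamma'^{-1})$, so I may reduce to the case $\alpha_{\bx} > 0$ for every $\bx$.

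The core of the argument is then to introduce the symmetrisation $S := D(\sqrt{\alpha})\,\Gamma\,D(\sqrt{\alpha})$: it is symmetric positive definite and is similar to $\tilde P := \Gamma D(\alpha)$ through conjugation by $D(\sqrt{\alpha})$. Because $\Gamma(\bx,\by) = \bras{\nu_{\bx},\nu_{\by}} \geq 0$ and $\Gamma\alpha = \mathbf{1}$, the matrix $\tilde P$ has non-negative entries and row sums equal to $1$, hence is a stochastic matrix; its spectral radius is therefore $1$, and so is that of $S$. The identity $\Gamma\alpha = \mathbf{1}$ also gives $S\sqrt{\alpha} = D(\sqrt{\alpha})\Gamma\alpha = D(\sqrt{\alpha})\mathbf{1} = \sqrt{\alpha}$, so $1$ is genuinely an eigenvalue of $S$, and since $S$ is positive definite its entire spectrum lies in $(0,1]$ with maximum $1$. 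Consequently $S^{-1} \succeq \Id$, hence $(S^{-1})(\bx,\bx) \geq 1$ for every $\bx$. Inverting the conjugation gives $\Gamma^{-1} = D(\sqrt{\alpha})\,S^{-1}\,D(\sqrt{\alpha})$, whence $(\Gamma^{-1})(\bx,\bx) = \alpha_{\bx}(S^{-1})(\bx,\bx) \geq \alpha_{\bx}$; summing over $\bx$ yields $\Tr(\Gamma^{-1}) \geq \sum_{\bx}\alpha_{\bx} = 1$, as claimed. Equality forces $(S^{-1})(\bx,\bx) = 1$ for every $\bx$; since $1$ is the smallest eigenvalue of $S^{-1}$ this drives each $e_{\bx}$ into the corresponding eigenspace, so $S^{-1} = \Id$, $\Gamma = D(1/\alpha)$ is diagonal, and the $(\nu_{\bx})$ are orthogonal.

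The step I expect to require most care is recognising the symmetrised matrix $S$ and its dual role: positive-definiteness comes from $\Gamma$, while the crucial spectral bound of at most $1$ comes from its similar---and stochastic---cousin $\tilde P$. Once both ingredients are in place, a one-line diagonal inspection closes both the inequality and the equality case.
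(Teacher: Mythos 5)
Your Part~1 argument is correct and, modulo notation, coincides with the paper's: you apply Cauchy--Schwarz to the pairing of $\Gamma^{1/2}e_{\bx}$ with $\Gamma^{-1/2}e_{\bx}$, whereas the paper phrases the same inequality via the dual frame $\tilde\nu_{\bx} = (\Gamma^{-1}\Lambda)(\bx,\cdot)$ and the Cauchy--Schwarz step $\nors{\tilde\nu_{\bx}}^2 \geq \bras{\tilde\nu_{\bx},\nu_{\bx}}^2/\nors{\nu_{\bx}}^2$; the equality analysis in both cases forces $\Gamma$ to be diagonal.

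Your Part~2 takes a genuinely different route. The paper derives Part~2 as an almost immediate corollary of Part~1: from $\bras{\mu,\nu_{\by}} = 1$ and non-negativity of the Gram entries, one keeps only the $\bx=\by$ term to get $1 \geq \alpha_{\by}\nors{\nu_{\by}}^2$, hence $\sum_{\by} 1/\nors{\nu_{\by}}^2 \geq \sum_{\by}\alpha_{\by} = 1$, and then invokes~\eqref{Trace.ineq}. You instead symmetrise $\Gamma$ to $S = D(\sqrt\alpha)\Gamma D(\sqrt\alpha)$, observe that its similar cousin $\Gamma D(\alpha)$ is a stochastic matrix, and conclude via Perron--Frobenius that $S \preceq \Id$ and $S^{-1}\succeq \Id$. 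Both routes ultimately yield the same diagonal bound $\Gamma^{-1}(\bx,\bx) \geq \alpha_{\bx}$; the paper's path is shorter and avoids any reduction, while yours packages an elegant spectral fact (the Gram matrix of local equilibria, scaled, is stochastic) that could be reused elsewhere. Two small gaps to close in your Part~2: (i) your reduction step, deleting indices with $\alpha_{\bx_0} = 0$, is actually \emph{strict} ($\Tr(\Gamma^{-1}) = \Tr(S^{-1}) + (\Gamma^{-1})(\bx_0,\bx_0) > \Tr(\Gamma'^{-1})$ since $(\Gamma^{-1})(\bx_0,\bx_0) > 0$), and you need to say so, because your equality analysis is carried out on the reduced Gram matrix and only delivers orthogonality of the reduced sub-collection unless you first rule out any actual reduction; (ii) you only prove that equality implies orthogonality, not the converse, which is required for the ``if and only if'' and is immediate from $\Gamma = D(\nors{\nu_{\bx}}^2)$ and $\Gamma\alpha = \mathbf{1}$.
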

\noindent
{\it Comment:} $\SS({\Lambda})$ is thus maximal when the $(\nu_{\bx}, \bx \in \bX)$ are linearly dependent, and
minimal when they are orthogonal. Moreover, we know the minimal value of $\SS({\Lambda})$, 
when $\mu$ is a convex combination of the $(\nu_{\bx}, \bx \in \bX)$.  Note that this is necessarily the case
if the convex hull of the $\nu_{\bar x}$ is stable under~$P$,
i.e., when $\Lambda P = \bar P \Lambda$ for some stochastic $\bar P$.
Indeed it is then stable under $e^{t{\mathcal L}}$ for any $t > 0$
and the rows of $\Lambda e^{t{\mathcal L}}$ converge to $\mu$ when $t$ goes to infinity.
Note also that we are using ``$\ell_2(\mu)$ computations'' (through the Gram matrix)
to define the squeezing of measures that are normalized in $\ell_1(\mu) \sim \ell_\infty^*(\mu)$
(these are {\em probability} measures). This proposition shows that such a mixture of norms is not meaningless.

\subsection{Random forests} \label{notation.sub}
Note that the weight function $w$ induces a structure of oriented graph on~$\XX$, $e=(x,y)$ being
an oriented edge if and only if $w(e):=w(x,y)>0$.  Let $\EE$ be the set of oriented edges, and $G=(\XX,\EE)$
the oriented graph just defined. An oriented forest $\phi$ on $\XX$ is a
collection of rooted trees of $G$, oriented from their leaves towards  their root. 
A spanning oriented forest (s.o.f.) on $\XX$ is an oriented forest which exhausts
the points in $\XX$. The set of roots of a spanning oriented forest $\phi$ is denoted by $\rho(\phi)$. 
 
We introduce now a real parameter $q >0$, and associate to each oriented forest a weight
\begin{equation}
 \label{poidsforet.def}
 w_q(\phi) := q^{|\rho(\phi)|} \prod_{e \in \phi} w(e) \, .
\end{equation}
These weights can be renormalized to define a probability measure on the set of spanning oriented forest,
\begin{equation}
 \label{probaforet.def}
 \pi_q(\phi) := \frac{ w_q(\phi) }{Z(q)} \, ,
 \end{equation}
where the partition function $Z(q)$ is given by
\begin{equation}
 \label{partition.def}
 Z(q):= \sum_{\phi \mbox{ s.o.f.}}  w_q(\phi) \, .
\end{equation}

We can sample from $\pi_q$ by using Wilson's algorithm (\cite{Wi}, \cite{PW})
which can be described as follows.
Let $\Phi_c$ be the current state, an oriented forest, of the spanning oriented forest being constructed. 
At the beginning, $\Phi_c $ has no nodes or edges.
While $\Phi_c$ is not spanning, i.e., while there is a vertex in $\XX$ which is not in the vertex set $V(\Phi_c)$ of $\Phi_c$, perform the following steps:
\begin{itemize}
\item Choose a point $x$ in ${\mathcal X} \setminus V(\Phi_c)$, in any deterministic or random way.
\item  Let evolve the Markov process $(X(t), t \geq 0)$ from $x$, and stop it 
	at $T_q \wedge H_{V(\Phi_c)}$ with $T_q$ an independent exponential time  of parameter $q$
	and $H_{V(\Phi_c)}$ the hitting time of $V(\Phi_c)$.
\item Erase the loops of the trajectory drawn by $X$ to obtain a self-avoiding path $C$
	starting from $x$ and oriented towards its end-point.
\item Add $C$ to $\Phi_c$.
\end{itemize}
Each  iteration of the ``while loop'' stopped by the exponential time, gives birth to another tree. 
Wilson's algorithm is not only a way to sample $\pi_q$, it is also a powerful tool to study it.
The main strength of this algorithm is the total freedom one has in choosing the starting points $x$'s
of~$X$.

 In the sequel, $\Phi$ will denote a random variable defined on some probability space 
 $(\Omega_f, \AA_f, \P_q)$, having distribution $\pi_q$. The corresponding expectation will be denoted 
 by $\E_q$.  We will often work with two independent sources of randomness: the Markov 
 process $X$, and the random forest  $\Phi$. Integration with respect to $X$ starting from $x$ will
 be denoted by $P_x$ and $E_x$. When $X$ is started with an initial measure $\pi$, we will use 
 the notations $P_{\pi}$ and $E_{\pi}$. When we integrate over both randomness, we will use 
 the notations $\E_{x,q}, \E_{\pi,q}$ and  $\P_{x,q}, \P_{\pi,q}$. 
 The random forest $\Phi$ defines a partition of $\XX$, two points being in the same set of the partition
if they belong to the same tree. This partition will be denoted by $\AA(\Phi)$. A point $x \in \XX$ being
fixed, $t_x$ is the tree of $\Phi$ containing $x$, $\rho_x$ its root, and $A(x)$ the unique element of $\AA(\Phi)$
containing $x$.

A theorem of Kirchhoff \cite{Ki} gives in this context that
\begin{equation}
\label{part.exp}
Z(q) = \det (q \,  \Id - \LL)
	=  \prod_{j < n} (q + \lambda_j) ,
\end{equation}
and this implies (see for example \cite{LA} for more details,
a proof of \eqref{part.exp} and the following proposition):
\begin{prop}\label{nombre.prop}
   For all $k \in \acc{0, \cdots, n}$, 
 \[ \P_q\cro{\abs{\rho(\Phi)}=k} =
  \sum_{\begin{matrix}\scriptstyle{ J \subset \acc{0, \cdots, n-1}}\\[-2pt]  \scriptstyle{  \abs{J}=k}  \end{matrix} }
\,  \prod_{j \in J} \frac{q}{q+\lambda_j} \,\,   \prod_{j \notin J} \frac{\lambda_j}{q+\lambda_j} \, . 
 \]
Otherwise stated, the number of roots has the same law as $\sum_{j=0}^{n-1} B_j$ where 
$B_0, \cdots, B_{n-1}$ are independent, $B_j$ having Bernoulli distribution with parameter $\frac{q}{q+\lambda_j}$.
\end{prop}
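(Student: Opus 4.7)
The plan is to extract the law of $|\rho(\Phi)|$ directly from Kirchhoff's formula~\eqref{part.exp} by matching coefficients in two different expansions of the same polynomial $Z(q)$.

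First I would view $Z(q)$ as a polynomial in $q$ with coefficients that count weighted forests with a prescribed number of roots. From the definition~\eqref{poidsforet.def},
\[
Z(q) \;=\; \sum_{\phi\text{ s.o.f.}} q^{|\rho(\phi)|} \prod_{e\in\phi} w(e) \;=\; \sum_{k=0}^{n} q^{k}\, N_k,
\qquad N_k := \sum_{\substack{\phi\text{ s.o.f.}\\|\rho(\phi)|=k}} \prod_{e\in\phi} w(e).
\]
By~\eqref{probaforet.def}, $\P_q[|\rho(\Phi)| = k] = q^k N_k / Z(q)$, so the whole task is to identify~$N_k$.

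Next I would exploit Kirchhoff's identity~\eqref{part.exp}, which gives a second expression for the same polynomial:
\[
Z(q) \;=\; \prod_{j=0}^{n-1}(q+\lambda_j) \;=\; \sum_{J\subset\{0,\dots,n-1\}} q^{|J|}\prod_{j\notin J}\lambda_j
\;=\; \sum_{k=0}^{n} q^{k}\, e_{n-k}(\lambda_0,\dots,\lambda_{n-1}),
\]
where $e_\ell$ denotes the elementary symmetric polynomial. Since two polynomials in $q$ agree on a nonempty open interval iff their coefficients coincide, I conclude that $N_k = \sum_{|J|=k}\prod_{j\notin J}\lambda_j$.

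Combining the two expressions yields
\[
\P_q\bigl[|\rho(\Phi)|=k\bigr] \;=\; \frac{q^k \sum_{|J|=k}\prod_{j\notin J}\lambda_j}{\prod_{j=0}^{n-1}(q+\lambda_j)}
\;=\; \sum_{|J|=k} \prod_{j\in J}\frac{q}{q+\lambda_j}\,\prod_{j\notin J}\frac{\lambda_j}{q+\lambda_j},
\]
which is exactly the stated formula. The Bernoulli interpretation is then immediate: the right-hand side is the probability that $\sum_j B_j = k$, where the $B_j$ are independent Bernoullis with parameters $q/(q+\lambda_j)$, since each summand corresponds to selecting the set $J$ of indices on which $B_j=1$.

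The only non-routine ingredient is the Kirchhoff identity~\eqref{part.exp}, which is quoted from the literature; once that is granted, the proof is a pure coefficient comparison, and the combinatorial meaning of $N_k$ plays no role beyond ensuring the polynomial interpretation of $Z(q)$ is valid. The main obstacle, if any, would be a self-contained proof of~\eqref{part.exp} itself; for this I would invoke the matrix-tree-type argument computing $\det(q\,\Id-\mathcal L)$ via the Cauchy--Binet formula applied to a suitable incidence matrix, which recovers the weighted spanning forest enumeration.
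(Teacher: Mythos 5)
Your proof is correct, and it is essentially the standard derivation: the paper itself does not include a proof of this proposition, deferring instead to reference \cite{LA}, but the argument you give—expanding $Z(q)$ once as the generating polynomial $\sum_k q^k N_k$ of weighted forests grouped by root count, once via Kirchhoff's identity $\prod_{j<n}(q+\lambda_j)$, and then matching coefficients—is precisely how the implication from \eqref{part.exp} to the stated formula is meant to go. All the steps are sound, including the factorization of each summand into $\prod_{j\in J}\frac{q}{q+\lambda_j}\prod_{j\notin J}\frac{\lambda_j}{q+\lambda_j}$ and the identification with a sum of independent Bernoulli variables. One can also observe that the edge case $k=0$ is handled automatically, since $\lambda_0=0$ makes every term with $0\notin J$ vanish, and correspondingly $B_0$ is a Bernoulli of parameter $1$, consistent with every spanning forest having at least one root.
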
 

\subsection{Main results: approximate and exact solutions of the intertwining equations}\label{grey}
\subsubsection{Approximate solution of $\Lambda P = \bar P\Lambda$}\label{yellow}
Assume that we sampled $\Phi$ from $\pi_q$ for some parameter $q > 0$.
For $q' > 0$ we then set
\begin{itemize}
\item $\bX := \rho(\Phi)$;
\item For any $\bx \in \bX$, $\nu_{\bx}(\cdot) := K_{q'}(\bx,\cdot)$ (cf. equation \eqref{Kq.def}), i.e. 
   $\Lambda= K_{q'} |_{\bX \times \XX}$;
\item $\bP(\bx,\by):=P_{\bx}\cro{X(H_{\bX}^+)=\by}$ with, for any $A \subset {\mathcal X}$,
	\begin{equation} \label{return.def}
		H^+_A := \inf \acc{t \geq \tau_1, X(t) \in A } \, . 
	 \end{equation}
	$H^+_A$ is in other words the return time in $A$, and $\bar P$ is the (irreducible and reversible) Markovian kernel
	associated with the trace chain of $X$ on $\bar{\mathcal X}$.
\end{itemize}
Here $\bX$ is a random subset of ${\mathcal X}$, and so is its cardinality.
If we want to keep approximately $m$ points from $\XX$, we have  to ensure that 
\begin{equation}
\label{Relationmq.eq}
\E_q\cro{\abs{\bX}} = \sum_{i=0}^{n-1} \frac{q}{q+ \lambda_i} \approx m\, .
\end{equation}
This can be obtained, starting from any $q$ to sample $\Phi$, by updating $q$ according to
$q \leftarrow q \times  m / |\rho(\Phi)|$ before re-sampling $\Phi$ and going so up to getting
a satisfactory number of roots (see~\cite{LA} for more details).

Let us now define fore each $j \in \acc{0,\cdots,n-1}$, 
\[ p_j := \frac{q}{q+\lambda_j}\,, \quad  p'_j := \frac{q'}{q'+\lambda_j}    \, .
\]
and denote by $d_{TV}$ 
the total variation distance:
if $\nu$ and $\nu'$ are two probability measures on $\XX$, 
\[ d_{TV}(\nu, \nu') = \frac 1 2 \sum_{x \in \XX} \abs{\nu(x)-\nu'(x)} \, .
\]
\begin{theo} \label{cornet}
For all $m \in \acc{1,\cdots,n}$,
\begin{equation}
\label{TVDFm.eq}
 \E_q \cro{\sum_{\bx \in \bX} d_{TV}(\Lambda P(\bx,\cdot), \bP \Lambda(\bx, \cdot))  \biggm| {\abs{\bX}=m}  } 
\leq \frac{q' (n-m)}{\alpha} \,,
 \end{equation}
and 
\begin{equation}
\label{TVDF.eq}
 \E_q \cro{\sum_{\bx \in \bX} d_{TV}(\Lambda P(\bx,\cdot), \bP \Lambda(\bx, \cdot))} 
\leq  \frac{q'}{\alpha} \sum_{i=1}^{n-1} \frac{ \lambda_i}{q + \lambda_i} \, .
 \end{equation}
In addition, with
\[ S_n := \sum_{j=1}^{n-1} p'^2_j (1-p_j)^2 \, \, ; \, \, 
T_n:= \sum_{j=1}^{n-1} \frac{p_j^2}{p'^2_j } \, \, ; \, \,  
V_n =  \sum_{j=1}^{n-1} p_j (1-p_j) 
\,, 
\]
it holds
\begin{equation} \label{flatDF.eq}
	\E_q\cro{ \SS(\Lambda) \biggm| {\abs{\bX}=m}}
	\leq \frac{
		\min\left\{
			\sqrt{1 + \sqrt{\frac{T_n}{S_n}}} \; \exp\pare{\sqrt{S_n T_n}- V_n} ;
			\sqrt{1 + T_n} \exp\pare{\frac{\left(1 + S_n T_n)\right)}{2} - V_n}
		\right\}
	}{\P_q\cro{|\bar{\mathcal X}| = m}}
\end{equation}
for any $m \in \acc{1,\cdots,n}$.
\end{theo}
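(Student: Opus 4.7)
The plan is to attack the theorem in two distinct phases: first the two total variation bounds \eqref{TVDFm.eq} and \eqref{TVDF.eq}, and then the squeezing bound \eqref{flatDF.eq}.

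First I observe that since $\Lambda = K_{q'}|_{\bX \times \XX}$ and $\bP$ depend on $\Phi$ only through $\bX = \rho(\Phi)$, the conditional bound \eqref{TVDFm.eq} reduces to the following deterministic statement: for every subset $S \subset \XX$ with $|S| = m$, setting $\bX = S$, one has $\sum_{\bx \in S} d_{TV}(\Lambda P(\bx,\cdot), \bP\Lambda(\bx,\cdot)) \leq q'(n-m)/\alpha$. Assuming this, the unconditional bound \eqref{TVDF.eq} follows at once by averaging and invoking Proposition~\ref{nombre.prop}, which gives $\E_q[n - |\bX|] = \sum_{i \geq 1} \la_i/(q+\la_i)$ since $\la_0 = 0$. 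So the real work for the first phase is just the deterministic inequality.

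For the deterministic bound, my plan has two ingredients. The resolvent identity $\LL K_{q'} = q'(K_{q'} - \Id)$, combined with $P = \Id + \LL/\alpha$, gives
\[
\Lambda P(\bx, y) = K_{q'}(\bx,y) + \frac{q'}{\alpha}\bigl[K_{q'}(\bx,y) - \delta_{\bx}(y)\bigr].
\]
Second, applying the strong Markov property at $H^+_{S}$ together with the memorylessness of $T_{q'}$ produces the first-passage decomposition
\[
K_{q'}(\bx,y) = P_{\bx}\bigl[X(T_{q'}) = y, \, T_{q'} < H^+_{S}\bigr] + E_{\bx}\bigl[\ind_{H^+_S \leq T_{q'}}\, K_{q'}(X(H^+_{S}),y)\bigr],
\]
while $\bP\Lambda(\bx,y) = E_{\bx}[K_{q'}(X(H^+_S),y)]$. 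Subtracting and applying the triangle inequality bounds $2 d_{TV}(\Lambda P(\bx,\cdot), \bP\Lambda(\bx,\cdot))$ by a combination whose positive and negative mass is controlled by probabilities of events ``$T_{q'} < H^+_S$'' plus a $q'/\alpha$ contribution. Summing over $\bx \in S$, the main obstacle is to convert the hitting-probability terms into the clean $q'(n-m)/\alpha$ right-hand side: I expect this to come from a reversibility/accounting identity assigning to each non-root $y \notin S$ a total weight of $1$, via a Dirichlet-form-type computation with $X$ killed on returning to $S$.

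For the squeezing bound \eqref{flatDF.eq}, the starting point is an explicit formula for $\Gamma$. Using reversibility $\mu(x) K_{q'}(x,y) = \mu(y) K_{q'}(y,x)$, we compute
\[
\Gamma(\bx,\by) = \sum_{z \in \XX} \frac{K_{q'}(\bx,z)\,K_{q'}(\by,z)}{\mu(z)} = \frac{K_{q'}^2(\bx,\by)}{\mu(\by)},
\]
so $\Gamma = (K_{q'}^2)|_{\bX \times \bX} D(1/\mu|_{\bX})$ and hence $\Tr(\Gamma^{-1}) = \Tr\bigl(D(\mu|_{\bX})\,[(K_{q'}^2)|_{\bX\times\bX}]^{-1}\bigr)$. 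The plan is then to combine the spectral decomposition $K_{q'} = \sum_j p'_j\, \psi_j \psi_j^*$ (with $\psi_j$ an orthonormal basis of eigenvectors of $-\LL$ in $\ell_2(\mu)$) with the determinantal structure of $\bX = \rho(\Phi)$ inherited from Kirchhoff's formula and Wilson's algorithm. Conditioning on $|\bX|=m$ produces the denominator $\P_q(|\bX|=m)$, while the numerator should be bounded by a joint moment of the submatrix $(K_{q'}^2)|_{\bX\times\bX}$ that, after Cauchy--Schwarz applied to the Bernoulli-type joint law of the roots in Proposition~\ref{nombre.prop}, yields the two exponential-in-$\sqrt{S_n T_n}$ and $S_n T_n$ upper bounds. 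I expect this third stage to be by far the hardest step: the central difficulty is controlling the inverse of a random principal submatrix of $K_{q'}^2$, which forces a delicate interplay between the $\ell_2(\mu)$ spectral expansion and the determinantal/Wilson structure, with the $V_n$ term arising as the exponential of the Bernoulli variance correction that aligns the two expansions.
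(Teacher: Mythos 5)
Your plan for \eqref{TVDFm.eq} has a genuine gap. You claim that the conditional bound reduces to a deterministic inequality valid for \emph{every} $m$-point set $S$, but no such inequality holds: the bound $q'(n-m)/\alpha$ is a statement in expectation over the random root set, not a pointwise statement. Following through your first-passage decomposition together with the resolvent identity, one lands (essentially as the paper does) on the pointwise estimate
\[
d_{TV}\bigl(\Lambda P(\bx,\cdot),\,\bP\Lambda(\bx,\cdot)\bigr)\;\leq\;\sum_y P(\bx,y)\,P_y\bigl[H_{S}\geq T_{q'}\bigr]\;\leq\;q'\,E_{\bx}\bigl[H^+_{S}-\tau_1\bigr],
\]
for a fixed $S$. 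But $\sum_{\bx\in S}E_{\bx}[H^+_S-\tau_1]$ does \emph{not} equal $(n-m)/\alpha$ for arbitrary $S$. Already the two-state chain $w(1,2)=1$, $w(2,1)=2$ (so $\alpha=2$, $\mu=(2/3,1/3)$) gives $E_2[H^+_2-\tau_1]=1$ while $(n-1)/\alpha=1/2$. The ``reversibility/accounting identity'' you hope for does not exist per set; the missing ingredient is precisely the averaging over the forest roots, Proposition~\ref{hitting.prop} formula~\eqref{luna}, which encodes a well-distributedness property of $\rho(\Phi)$ (itself proved via the determinantal/transfer-current structure). This is not a technicality but the conceptual heart of the bound: if a deterministic per-set inequality existed, there would be no need for random forests at all.

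For the squeezing bound \eqref{flatDF.eq}, your identity $\Gamma(\bx,\by)=K^2_{q'}(\bx,\by)/\mu(\by)$ is correct, and identifying the determinantal law of $\rho(\Phi)$ and the eigenexpansion of $K_{q'}$ as the two structures to be aligned is the right instinct. However, the sketch is too vague to assess: you do not say how to control the inverse of the random principal submatrix beyond ``Cauchy--Schwarz applied to the Bernoulli-type joint law''. The paper's route is to rewrite $\SS(\Lambda)^2 = \sum_{\bx}\det_{\bX\setminus\{\bx\}}(\Gamma)/\det(\Gamma)$ as a ratio of Gram volumes, express the conditional law of $\bX$ given $|\bX|=m$ as a mixture over $m$-subsets $J$ of eigenmodes (Eq.~\eqref{ProbCondbX.eq}), expand the volumes via Cauchy--Binet in the eigenbasis, apply Cauchy--Schwarz across $R$ and $J$, and only then do an elementary optimization to produce the exponential bounds with $S_n,T_n,V_n$. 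You should expect to need that double Cauchy--Binet collapse and the telescoping identity $\sum_{|R|=m}\det^2(\langle\delta_{\bx}/\|\delta_{\bx}\|^*;\mu_j\rangle^*)=\Vol^2(\mu_j,j\in J)=1$ explicitly; the $K^2_{q'}$ formulation does not by itself yield a workable bound.
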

\noindent{\it Proof:} See Section~\ref{jazz}.

\noindent {\it Comment:}
Our upper bounds depend on ${\mathcal L}$ through its spectrum only. They show that if there is a gap in this spectrum
---that is if for some $1 < m < n$ it holds $\lambda_{m - 1} \ll \lambda_m$--- then we can have asymptotically exact
solutions with small squeezing by choosing $\lambda_{m - 1} \ll q \ll q' \ll \lambda_m$. We then have indeed
$q' \ll \alpha$ since $\lambda_m \leq 2\alpha$ and $p_j \sim p_j' \sim 1$ for $j < m$,
while $p_j \ll p_j' \ll 1$ for $j \geq m$.
We can then have a vanishing error in the approximation, see~\eqref{TVDF.eq}.
In addition we can have $V_n \ll 1$, ${S_n} \ll 1$, $T_n \sim m - 1$, $\P_q[|\bar{\mathcal X}| = m] \sim 1$
(recall Proposition~\ref{nombre.prop})
and an upper bound on the mean value of ${\mathcal S}(\Lambda)$ that goes like $\sqrt m$.
This upper bound has to be compared with the lower bounds of Proposition~\ref{flatness.prop}, i.e. with $1$ if
we have asymptotic solutions of intertwining equations. 
But for some simple low temperature metastable systems leading to such a gap in the spectrum
we will have ${\mathcal S}(\Lambda) \sim 1$: our upper bound is not optimal.

\subsubsection{Approximate solutions of $\Lambda K_{q'} = \bar P\Lambda$}
Assume once again that we sampled $\Phi$ from $\pi_q$ for some parameter $q > 0$.
But let us modify our choices for $\bar{\mathcal X}$, $\Lambda$ and $\bar P$,
by using this time the partition $\AA(\Phi)$. Set:
 \begin{itemize}
 \item $\bX:=\rho(\Phi)$ (one could rather think that $\bar{\mathcal X}$ is the set
	of the different pieces forming the partition ${\mathcal A}(\Phi)$ but the notation
	will be simpler by using the set of roots, which obviously is in one to one correspondence 
	through the map $A : \bar x \in \rho(\Phi) \mapsto A(\bar x)$);
 \item for any $\bx \in \bX$, $\nu_{\bx}(\cdot) := \mu_{A(\bx)}(\cdot)$, with, for any $A \subset {\mathcal X}$,
	$\mu_A$ being defined by the probability $\mu$ conditioned to $A$: $\mu_A := \mu(\cdot | A)$;
 \item for any $\bx,\by \in \bX$, $\bP(\bx,\by):=  P_{\mu_{A(\bx)}}\cro{X(T_{q'}) \in A(\by)}$, with $T_{q'}$ being
	as previously an exponential random variable of parameter $q'$ that is independent from $X$.
	Irreducibility and reversibility of $\bar P$ are then inherited from those of $P$.
 \end{itemize}
 It follows from Proposition \ref{flatness.prop} that the squeezing of $\acc{\nu_{\bx}, \bx \in \bX}$ is minimal and equal 
 to one. 
 
To bound the distance between $\Lambda K_{q'}$ and $\bP \Lambda$, we introduce another random
forest $\Phi'$ distributed as $\pi_{q'}$ and independent of $\Phi$ and $X$. For any $x \in \XX$,
$t'_x$ is the tree containing $x$ in $\Phi'$, $\rho'_x$ its root, $A'(x)$ the unique element
of $\AA(\Phi')$ containing $x$, and $\Gamma'_x$ is the path  going from $x$ to $\rho'_x$ in 
$\Phi'$. By Wilson algorithm started at~$x$, $\Gamma'_x$ is the trajectory of a loop-erased random walk
started from $x$ and stopped at an exponential time $T_{q'}$. We denote by $|\Gamma'_x|$ its length,
that is the number of edges to be crossed in $\Phi'$ to go from $x$ to $\rho'_x$.

\begin{theo}
\label{TVmeta.theo} Let $p \geq 1$, and $p^*$ its conjugate exponent, so that $\frac 1 p + \frac 1{p^*} = 1$. 
\[ \E_q\cro{\sum_{\bx \in \bX} d_{TV}(\Lambda K_{q'}(\bx, \cdot), \bP \Lambda(\bx, \cdot))}
\leq \pare{\E_q\cro{\abs{\rho(\Phi)}}}^{1/p}
\pare{\frac{q'}{q} \sum_{x \in \XX} \E_{q'}\cro{|\Gamma'_x|}}^{1/p^*} 
.
\] 
\end{theo}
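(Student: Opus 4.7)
The plan is to derive the bound by interpolating between two endpoint estimates. The key observation is that for conjugate exponents $p, p^* \in [1, \infty]$ and any positive reals $A, B$ we have $\min(A, B) \leq A^{1/p} B^{1/p^*}$: indeed, if without loss of generality $A \leq B$, then $A^{1/p} B^{1/p^*} \geq A^{1/p + 1/p^*} = A$. Therefore it suffices to prove the two separate bounds
\begin{equation*}
\E_q\!\left[\sum_{\bx \in \bX} d_{TV}(\Lambda K_{q'}(\bx, \cdot), \bP \Lambda(\bx, \cdot))\right] \leq \E_q[|\rho(\Phi)|]
\end{equation*}
(the endpoint $p = 1$) and
\begin{equation*}
\E_q\!\left[\sum_{\bx \in \bX} d_{TV}(\Lambda K_{q'}(\bx, \cdot), \bP \Lambda(\bx, \cdot))\right] \leq \frac{q'}{q} \sum_{x \in \XX} \E_{q'}\!\left[|\Gamma'_x|\right]
\end{equation*}
(the endpoint $p = \infty$). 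The first bound is immediate from $d_{TV} \leq 1$ together with $|\bX| = |\rho(\Phi)|$.

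The second bound is where the real work lies. The starting point is the reversibility-based identity
\begin{equation*}
\Lambda K_{q'}(\bx, y) - \bP \Lambda(\bx, y) = \frac{\mu(y)}{\mu(A(\bx))} \bigl( P_y[X(T_{q'}) \in A(\bx)] - P_{\mu_{A(y)}}[X(T_{q'}) \in A(\bx)] \bigr),
\end{equation*}
obtained from $\mu(x) K_{q'}(x, y) = \mu(y) K_{q'}(y, x)$, together with the Wilson coupling $P_y[X(T_{q'}) = z] = \P_{q'}[\rho'_y = z]$ arising from the independent forest $\Phi' \sim \pi_{q'}$. Since $\rho'_y = \rho'_z$ whenever the loop-erased walks $\Gamma'_y$ and $\Gamma'_z$ share a vertex, the inner absolute value is controlled by the probability that loop-erased walks from points of $A(y)$ fail to merge with $\Gamma'_y$, and this in turn is naturally linked to the path-lengths $|\Gamma'_x|$. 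Taking $\E_q$ and using the independence of $\Phi$ and $\Phi'$, a rearrangement over the edges of $\Phi'$ should yield the announced right-hand side, with the ratio $q'/q$ arising from comparing root densities under $\pi_q$ and $\pi_{q'}$.

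The main obstacle is the final bookkeeping step: combining the weights $\mu(y)/\mu(A(\bx))$ from the reversibility identity with the $\E_q$-expectation over $\Phi$-blocks and the $\E_{q'}$-expectation over $\Phi'$-paths, and extracting cleanly the factor $(q'/q) \sum_x \E_{q'}[|\Gamma'_x|]$. A natural route is to exploit Wilson's algorithm for both forests jointly, or to invoke identities linking root densities to the resolvent kernel such as $\P_q[x \in \rho(\Phi)] = q K_q(x, x)$, in order to convert $\pi_q$-weights into a factor of $q$ that cancels against the $q'$-weights arising from the loop-erased paths in $\Phi'$.
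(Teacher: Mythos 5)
Your strategic idea — prove the two endpoint bounds ($p=1$ and $p=\infty$) and then observe that $\min(A,B)\le A^{1/p}B^{1/p^*}$ — is logically valid and would indeed yield the theorem. It differs from the paper, which applies H\"older's inequality directly inside the joint sum-expectation $\sum_{x}\E_{q,q'}\bigl[\mu_{A(x)}(x)\ind_{A'(x)^c}(\rho_{\rho'_x})\bigr]$, but for this statement both routes deliver the same final bound. The $p=1$ endpoint you supply is correct.

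The difficulty is that your proposal does not actually prove the $p=\infty$ endpoint, and that endpoint is where all of the mathematical content lives. Your reversibility identity for $\Lambda K_{q'}(\bar x,y)-\bar P\Lambda(\bar x,y)$ is correct, but everything after it is a gesture rather than an argument: "a rearrangement over the edges of $\Phi'$ should yield the announced right-hand side'', "in order to convert $\pi_q$-weights into a factor of $q$ that cancels'' — these are exactly the steps that need to be established, and you explicitly concede them as "the main obstacle.'' By contrast, the paper's proof of the $p=\infty$ content requires several non-trivial steps: (i) the identity $K_{q'}(x,\cdot)=\E_{q'}\bigl[\mu_{A'(x)\cap A(\rho'_x)}\bigr]$ via Wilson's algorithm and Proposition~\ref{rootsgivenpartition.prop}; (ii) the observation that $\tilde K_{q'}(x,\cdot)=\E_{q'}\bigl[\mu_{A(\rho'_x)}\bigr]$ satisfies $\Lambda\tilde K_{q'}=\bar P\Lambda$ exactly, which reduces the total variation to $\E_{q'}\bigl[\mu_{A(\rho'_x)}(A'(x)^c)\bigr]$; (iii) the identity $\sum_{\bar x}\nu_{\bar x}(x)=\mu(x)/\mu(A(x))$ and its use in collapsing the sum over $\bar x$; and, crucially, (iv) a bijective forest-surgery argument (reverse the sub-path of $\gamma'$ from $z$ to $y$, add the edge $(z,u)$, match forests in $\FF_1$ with forests in $\FF_2$) combined with an occupation-time computation $G_q(y,z)=\tilde G_q(y,z,\gamma)/(q+\alpha)$ to extract the factor $q'/q$ and the path-length $\E_{q'}\bigl[|\Gamma'_x|\bigr]$. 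None of this is reconstructed in your sketch, so the proposal leaves a genuine gap at the heart of the theorem.
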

\noindent {\it Proof:} See Section~\ref{dizzy}.

\noindent {\it Comment:}
Note that
$$
	q'{\mathbb E}_{q'}\left[|\Gamma'_x|\right] = \alpha\frac{{\mathbb E}_{q'}\left[|\Gamma'_x|\right]}{\alpha / q'}
$$
is, up to the factor $\alpha$, 
the ratio between the mean number of steps of the loop-erased random walk and the mean number of steps 
of the simple random walk up to time $T_{q'}$, that is the time fraction spent outside loops up to time $T_{q'}$.
As a consequence ``the more recurrent is $X$ on time scale $1 / q'\,$'', the smaller is this ratio.

\subsubsection{Exact solutions of $\Lambda K_{q'} = \bar P\Lambda$}
We finally modify the previous random measures $\mu_{A(\bar x)}$
to build exact solution of Equation~\eqref{meta.eq} for $q'$ small enough.
We will use to this end a result due to Micchelli and Willoughby \cite{MW}:
for any $m > 0$ 
$$
	MW_m := \prod_{j \geq m} \frac1{\lambda_j}\left({\mathcal L} + \lambda_j{\rm Id}\right)
$$ 
is a Markovian kernel (one can see \cite{LA} for a probabilistic insight into the proof of this result).
Assume then that we sampled $\Phi$ from $\pi_q$ for some parameter $q > 0$,
let us keep $\bar{\mathcal X} = \rho(\Phi)$, but let us now set 
$$
	\nu_{\bar x} = \mu_{A(\bar x)} MW_m, 
	\qquad \bar x \in \bar{\mathcal X},
$$
with $m = |\bar{\mathcal X}|$.

\begin{theo} \label{cup}
	If the $\nu_{\bar x}$ have finite squeezing, then for $q'$ small enough,
	the $\nu_{\bar x} K_{q'}$ are in the convex hull of the $\nu_{\bar x}$.
\end{theo}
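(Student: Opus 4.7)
\noindent\emph{Proof sketch.} The plan is to diagonalise everything in the eigenbasis of $\LL$ and then observe that $MW_m$ forces the $\nu_{\bx}$ into an $(m-1)$-dimensional affine slice $\VV$ of the space of signed measures, on which $K_{q'}$ acts, as $q'\to 0$, as a small perturbation of the rank-one operator sending every probability to $\mu$.

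First I would fix an $\ell_2(\mu)$-orthonormal basis $(\psi_j)_{j<n}$ of eigenfunctions of $-\LL$ with $\psi_0\equiv 1$, and let $\psi_j^*$ be the associated signed measures (so $\psi_0^* = \mu$, while $\psi_j^*$ has zero total mass for $j\geq 1$ by orthogonality with $\psi_0$). Reversibility gives $\psi_j^*\LL = -\lambda_j\psi_j^*$, hence
\[
\psi_j^* MW_m = \Bigl(\prod_{k\geq m}\frac{\lambda_k-\lambda_j}{\lambda_k}\Bigr)\psi_j^*,\qquad \psi_j^* K_{q'} = \frac{q'}{q'+\lambda_j}\psi_j^*.
\]
The first factor vanishes for $j\geq m$ and is strictly positive for $j<m$ (equal to $1$ at $j=0$). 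Expanding any probability $\nu$ on the basis $(\psi_j^*)$, both $\nu MW_m$ and $\nu MW_m K_{q'}=\nu K_{q'}MW_m$ then lie in the $(m-1)$-dimensional affine space
\[
\VV := \mu + \Span(\psi_1^*,\ldots,\psi_{m-1}^*),
\]
so in particular every $\nu_{\bx}$ and every $\nu_{\bx}K_{q'}$ lives in $\VV$.

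The second step, which I expect to be the main obstacle, is a mild linear-algebraic observation: finite squeezing $\SS(\Lambda)<\infty$ amounts to linear independence of the $m$ row vectors $\nu_{\bx}$, and since they all sit in $\VV$ (of affine dimension $m-1$) they are automatically affinely independent, hence form an affine basis of $\VV$. Consequently any $\rho\in\VV$ admits a unique affine decomposition $\rho = \sum_{\by\in\bX}\beta_{\by}(\rho)\,\nu_{\by}$ with $\sum_{\by}\beta_{\by}(\rho)=1$, and the coordinate maps $\beta_{\by}$ are affine, therefore continuous.

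To conclude I would send $q'\downarrow 0$: the spectral identity above gives $\nu_{\bx}K_{q'}\to\mu$ in $\VV$, while $\mu\LL = 0$ implies $\mu MW_m = \mu$, so that
\[
\mu = \sum_{\by\in\bX}\mu(A(\by))\,\mu_{A(\by)}MW_m = \sum_{\by\in\bX}\mu(A(\by))\,\nu_{\by}
\]
with strictly positive weights $\mu(A(\by))>0$, placing $\mu$ in the relative interior of the convex hull of $(\nu_{\by})$. By continuity of the $\beta_{\by}$ the coefficients $\beta_{\by}(\nu_{\bx}K_{q'})$ stay positive for all $\bx,\by$ once $q'$ is small enough, which is exactly the claim; setting $\bP(\bx,\by) := \beta_{\by}(\nu_{\bx}K_{q'})$ then produces the desired stochastic intertwining kernel, and a quantitative threshold $q'_0$ can be read off from $\SS(\Lambda)$ and the spectrum of $\LL$ if wanted.
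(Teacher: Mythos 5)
Your proof is correct and, while it shares the paper's central insight (the filter $MW_m$ collapses everything onto an $m$-dimensional slice in which the $\nu_{\bx}$, being linearly independent, form a basis, after which one sends $q'\to 0$ and observes that $\mu$ lies in the relative interior of the simplex), the route you take is noticeably cleaner and more self-contained. The paper starts from the error term $\epsilon_{\bar x, q'}$ in the approximate intertwining relation of Theorem~\ref{TVmeta.theo}, applies $MW_m$ to both sides, observes that $\epsilon_{\bar x, q'}MW_m$ must decompose over the $\nu_{\bar y}$ by dimension count, and then invokes Theorem~\ref{TVmeta.theo} to get $\epsilon_{\bar x, q'}\to 0$; this also implicitly relies on $\epsilon_{\bar x,q'}$ having zero total mass so the resulting coefficients sum to one. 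You bypass the error-term bookkeeping and the appeal to Theorem~\ref{TVmeta.theo} entirely: by diagonalising in the $\psi_j^*$ basis you show directly that both $\nu_{\bx}$ and $\nu_{\bx}K_{q'}$ live in the affine space $\VV = \mu + \Span(\psi_1^*,\dots,\psi_{m-1}^*)$, that linear independence of $m$ points in an $(m-1)$-dimensional affine space forces affine independence and hence a well-defined, continuous set of barycentric coordinates, and that $\nu_{\bx}K_{q'}\to\mu$ with $\mu$ strictly interior. The affine-geometry formulation also makes the stochasticity of the limiting $\bP$ automatic (barycentric coordinates sum to one), which the paper leaves implicit. The one thing the paper's route buys and yours does not stress is the potential for a quantitative threshold on $q'$ coming from the total-variation bound of Theorem~\ref{TVmeta.theo}, which the paper's concluding remarks flag as the next step; you mention this possibility in passing, which is appropriate.
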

\noindent {\it Proof:} See Section~\ref{chopin}.

\noindent {\it Comment:}
Since we do not give quantitative bounds on how small $q'$ has to be for the thesis to hold,
and we do not bound the squeezing of these $\nu_{\bar x}$,
Theorem~\ref{cup} is almost a trivial result.
However the proof we will give suggests that the $\nu_{\bar x}$
are natural candidates for not too squeezed solution associated with some non very small $q'$.
It will also give further motivation to use squeezing to measure joint overlap:
we actually got to our squeezing definition by looking for quantitative bounds for this theorem.

\subsection{Signal processing and metastability}\label{violet}
\subsubsection{Pyramidal algorithms in signal processing}\label{green}
Our motivations for the previous results come both from signal processing and metastability studies.
First we are interested in extending classical pyramidal algorithms of signal processing on the discrete torus 
$${\mathcal X} = {\mathcal X}_0 = {\mathbb Z}_n = {\mathbb Z} / n{\mathbb Z}$$ to the case of signals on generic edge-weighted graphs.
Such algorithms are used for example to analyze or compress a given signal
$$ f{=}f_0 : {\mathcal X}_0 \rightarrow {\mathbb R} $$
through {\em filtering} and {\em subsampling} operations.
A {\em filter} is a linear operator which is diagonal in the same base as the discrete Laplacian ${\mathcal L}$.
A {\em low-pass filter} $K$
has eigenvalues of order 1 for low frequency modes,
i.e., eigenvectors that are associated with small eigenvalues of $-{\mathcal L}$,
and it has small eigenvalues for high frequency modes,
i.e., eigenvectors that are associated with large eigenvalues of $-{\mathcal L}$.
A pyramidal algorithm first computes $m = n/2$ {\em approximation coefficients} by
\begin{itemize}
\item computing a low-pass filtered version $Kf$ of the original signal $f$,
\item subsampling $Kf$ by keeping one in each two
	of its $n$ values, those in some ${\mathcal X}_1 = \bar{\mathcal X} \subset {\mathcal X}$,
	for example the $n / 2$ values in the even sites of ${\mathbb Z}_n$.
\end{itemize}
In doing so it defines a function $$\bar f : \bar x \in \bar{\mathcal X} \mapsto Kf(\bar x) \in {\mathbb R}$$
that can naturally be seen as a signal $f_1 : {\mathbb Z}_{n / 2} \rightarrow {\mathbb R}$
on a twice smaller torus.
It then computes an {\em approximation} $\tilde f$ of $f$ on ${\mathcal X}$ as a function of the approximation coefficients,
and a {\em detail function} $\tilde g = f - \tilde f$, which in turn can be encoded into $n - m$ {\em detail coefficients}.
Wavelet decomposition algorithms are of this kind.
It then applies a similar treatment to $f_1$, to define $f_2$, then $f_3$, \dots\ up to reaching a simple signal defined
on a small torus made of a few points only.
The reason why this can be useful for compression is that, for well chosen filters,
many of the detail coefficients obtained at the different levels are very small or negligible for a large class of smooth signals $f$.
And one just has to store the few non-negligible detail coefficients together with the coarsest approximation's coefficients
to reconstruct a good approximation of the original signal $f$. 
The point is then to find ``good'' filters, i.e. ``good'' $\varphi_{\bar x}$ in $\ell_2(\mu)$ (in this case $\mu$ is the uniform measure 
on ${\mathcal X}$, the reversible measure of the simple random walk associated with the discrete Laplacian)
so that, for all $f \in \ell_2(\mu)$, $$\langle\varphi_{\bar x}, f\rangle = Kf(\bar x).$$
And a basic requirement for good filters is that, for each $\bar x$, $\varphi_{\bar x}$ is {\em localized} around $\bar x$.
Even though the dual $\nu_{\bar x} = \varphi_{\bar x}^*$ are usually signed measures and not measures,
this is the reason why we want to think of the computation of the approximation coefficients $\bar f(\bar x) = \langle\nu_{\bar x} | f\rangle$
as computation of {\em local means}.
Since, $K$ being a low-pass filter, $\varphi_{\bar x}$ needs also to be ``localized'' in Fourier space 
(written in the diagonalizing basis of ${\mathcal L}$, it must have small coefficients on high-frequency modes).
Thus the difficulty comes from Heisenberg principle, which roughly says that no function $\varphi_{\bar x}$
can be well localized both in Fourier space and around $\bar x$.
Part of the art of wavelet design
lies in the ability to make a good compromise with Heisenberg principle
(see for example Chapter 7 in~\cite{VKG} for more details
on this point).

When moving to the case of signal processing for generic edge-weighted graph,
there are three main issues one has immediately to address
to build pyramidal algorithms:
\begin{enumerate}
\item[(Q1)] What kind of subsampling should one use? What could ``one every second node'' mean?
\item[(Q2)] Which kind of filter should one use? How to compute local means?
\item[(Q3)] On which (weighted) graph should the approximation coefficients $\bar f(\bar x)$ be defined to iterate the procedure?
\end{enumerate}
On a general weighted finite graph $G=(\XX,\EE,w)$ with $\EE=\{(x,y):x\neq y,w(x,y)>0\}$
none of these questions has a canonical answer.
Several attemps to tackle these issues and to generalize wavelet constructions have been proposed:
see \cite{SCHU} for a review on this subject and \cite{HAM} for one of the most popular method.
To our knowledge our proposition is the first one based on the solution of intertwining equations.
To motivate it let us first make the connection with metastability studies.

\subsubsection{Metastability and intertwining} \label{blue}
We first note that, since $\bar f$ should represent a coarse-grained version of $f$,
it is natural to think that the weights $\bar w(\bar x, \bar y)$ to be build
to answer to question (Q3) should be such that the obtained graph is itself
a coarse-grained version of $G$.
Since the law of the Markov process $X$ with generator ${\mathcal L}$
defined through the weights $w(x, y)$ completely characterized these weights,
we would like the associated process $\bar X$ to be a ``coarse-grained version of $X$''.
This is what we are used to build in metastability studies, possibly by seeing
$\bar X$ as a measure-valued process on a small state space, these measures being
probability measures on the large state space~${\mathcal X}$, on which $X$ is defined.
For example, when we want to describe the crystallisation of a slightly supersaturated vapor,
we can do it in the following way.
Vapor and crystal are defined by probability measures
concentrated on very different parts of a very large state space.
On this space a Markov process describing the temporal evolution of a microscopic configuration
evolves,
and this Markovian evolution has to be ``macroscopically captured''
by a new two-state Markov process evolving from gas (a probability measure on the large state space)
to crystal (another probability measure on the same space
almost non-overlapping with the previous one).
And this evolution is such that the gas should appear
as a {\em local equilibrium} left only to reach a more stable crystalline equilibrium.
This is usually done in some asymptotic regime (e.g. large volume or low temperature asymptotic)
and we refer to~\cite{OV} and~\cite{BdH} for mathematical accounts on the subject.

But we are here outside any asymptotic regime: we are given a finite graph $({\mathcal X}, w)$
or a Markov process $X$ and we want to define a finite coarse-grained version of this graph
and Markov process, $(\bar{\mathcal X}, \bar w)$ and $\bar X$. 
Solving intertwining equation $\Lambda P = \bar P\Lambda$ provides a way to do so.
When the size of $\bar P$ is smaller than the size of $P$,
equations~\eqref{banzai} suggest that the evolution of $X$ can be roughly described through that of $\bar X$:
from state or local equilibrium $\nu_{\bar x}$ the process $X$ evolves towards a new state
or local equilibrium~$\nu_{\bar y}$ 
which is chosen according to the Markovian kernel $\bar P$.
This can be turned into a rigorous mathematical statement by the following proposition
(recall that in our notation $\hat X$ is the discrete time Markov chain with kernel $P$).
\begin{prop} \label{poireau}
If Equation~\eqref{DF.eq} holds, i.e. Equation~\eqref{banzai} is in force for each $\bar x$ in $\bar{\mathcal X}$,
then, there is a filtration ${\mathcal F}$ for which $\hat X$ is ${\mathcal F}$-adapted and such that,
for each $\bar x$ in $\bar{\mathcal X}$ there is a ${\mathcal F}$-stopping time $T_{\bar x}$ and a random variable
$\bar Y_{\bar x}$ with value in $\bar{\mathcal X} \setminus \{\bar x\}$ such that
\begin{enumerate}
\item $T_{\bar x}$ is geometric with parameter $1 - \bar P(\bar x, \bar x)$;
\item $\nu_{\bar x}$ is stationary up to $T_{\bar x}$, i.e., for all $t \geq 0$,
	\begin{equation}\label{mucca}
		P_{\nu_{\bar x}}\left(\hat X(t) = \cdot \bigm| t < T_{\bar x}\right) = \nu_{\bar x}
		\,;
	\end{equation}
\item $P_{\nu_{\bar x}}\left(\bar Y_{\bar x} = \bar y\right) = \frac{\bar P(\bar x, \bar y)}{1 - \bar P(\bar x, \bar x)}$
	for all $\bar y$ in $\bar{\mathcal X}\setminus\{\bar x\}$;
\item $P_{\nu_{\bar x}}\left(\hat X(T_{\bar x}) = \cdot \bigm| \bar Y_{\bar x} = \bar y\right) = \nu_{\bar y}(\cdot)$;
\item $\left(\bar Y_{\bar x}, \hat X(T_{\bar x})\right)$ and $T_{\bar x}$ are independent.
\end{enumerate}
\end{prop}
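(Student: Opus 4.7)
The plan is to realize $\hat X$ as the first marginal of a bivariate Markov chain $(\hat X, \bar X)$ on $\XX \times \bX$, in which $\bar X$ is a Markov chain with kernel $\bP$ and, at every time, the conditional law of $\hat X$ given $\bar X = \bx$ is $\nu_{\bx}$. This is the classical Diaconis--Fill link construction, and it is made possible precisely by the intertwining Equation~\eqref{DF.eq}. Concretely, I would define the one-step transition of the pair by
\[
Q\bigl((x, \bx), (y, \by)\bigr) := P(x, y)\, \frac{\bP(\bx, \by)\, \nu_{\by}(y)}{\Lambda P(\bx, y)}
\]
whenever $\Lambda P(\bx, y) > 0$, and arbitrarily otherwise (this complementary set carries no mass under our initial condition, since there $\nu_{\by}(y) = 0$ for every $\by$ with $\bP(\bx, \by) > 0$). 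The identity $\Lambda P(\bx, y) = \sum_{\by} \bP(\bx, \by)\, \nu_{\by}(y)$ delivered by~\eqref{banzai} gives $\sum_{\by} Q((x, \bx), (y, \by)) = P(x, y)$, so the first marginal does reproduce $\hat X$.

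Starting this bivariate chain from $\bar X_0 = \bx$, $\hat X_0 \sim \nu_{\bx}$, a one-line computation gives
\[
P_{\nu_{\bx}}\!\bigl(\hat X_1 = y,\, \bar X_1 = \by\bigr) = \sum_x \nu_{\bx}(x)\, Q\bigl((x, \bx), (y, \by)\bigr) = \bP(\bx, \by)\, \nu_{\by}(y),
\]
which shows simultaneously that the $\bar X$-marginal is Markov with kernel $\bP$ and that the link $\hat X_t \mid \bar X_t = \bx \sim \nu_{\bx}$ is propagated. Iterating yields, by induction on $t$,
\[
P_{\nu_{\bx}}\!\bigl(\hat X_t = y,\, \bar X_1 = \cdots = \bar X_t = \bx\bigr) = \bP(\bx, \bx)^{t}\, \nu_{\bx}(y).
\]
I would then take ${\mathcal F}_t := \sigma(\hat X_s, \bar X_s : s \leq t)$, so that $\hat X$ is automatically ${\mathcal F}$-adapted, and set $T_{\bx} := \inf\{t \geq 1 : \bar X_t \neq \bx\}$ and $\bar Y_{\bx} := \bar X(T_{\bx})$, which are ${\mathcal F}$-stopping times and ${\mathcal F}_{T_{\bx}}$-measurable, respectively.

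Points (1) and (3) then follow from the Markov property of $\bar X$ under $\bP$. Point (2) is read off the displayed identity above, since $\{t < T_{\bx}\} = \{\bar X_1 = \cdots = \bar X_t = \bx\}$, so the ratio equals $\nu_{\bx}(y)$. For points (4) and (5) I would combine the two computations above to get
\[
P_{\nu_{\bx}}\!\bigl(T_{\bx} = t,\, \bar Y_{\bx} = \by,\, \hat X(T_{\bx}) = y\bigr) = \bP(\bx, \bx)^{t-1}\, \bP(\bx, \by)\, \nu_{\by}(y),
\]
in which the $t$-dependent factor and the $(\by, y)$-dependent factor separate; summing over the appropriate variables delivers (4) and the independence in (5). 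The main obstacle is essentially bookkeeping: checking that the single-step identity propagates to arbitrary times and that the degenerate set $\{\Lambda P(\bx, y) = 0\}$ on which $Q$ is chosen arbitrarily is never visited. There is no deeper analytical difficulty, as the intertwining equation is precisely what makes this coupling consistent.
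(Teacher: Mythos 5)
Your proposal is correct, and it takes a genuinely different route from the paper's proof. The paper constructs $T_{\bar x}$ and $\bar Y_{\bar x}$ directly, never introducing the dual chain: at each time $t$ an auxiliary uniform random variable $U_t$, independent of $\hat X$, decides whether to set $T_{\bar x} = t$ with probability $1 - \bar P(\bar x, \bar x)\nu_{\bar x}(\hat X(t))/(\nu_{\bar x}P)(\hat X(t))$, and a second uniform $U_t'$ then picks $\bar Y_{\bar x}$; the intertwining equation is invoked to show these are valid probabilities, and stationarity (point~(2)) is then verified by a direct one-step computation. You instead build the Diaconis--Fill bivariate chain $(\hat X, \bar X)$ with the explicit kernel $Q$, and define $T_{\bar x}$ as the exit time of $\bar X$ from $\bar x$ and $\bar Y_{\bar x} = \bar X(T_{\bar x})$. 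The two constructions realize the same joint law---the paper's uniform randomizations are exactly the latent coin flips that drive the $\bar X$-transition in your coupling---but the bookkeeping differs. Your route makes points~(1), (3), (5) fall out immediately from the Markov property of $\bar X$ under $\bar P$, with the link identity $P_{\nu_{\bar x}}(\hat X_t = \cdot, \bar X_1 = \cdots = \bar X_t = \bar x) = \bar P(\bar x, \bar x)^t\nu_{\bar x}(\cdot)$ delivering (2) and (4); the paper's route keeps everything in terms of $\hat X$ and the intertwining equation at the cost of slightly more explicit computation for the stationarity check. One small point worth stating explicitly in your write-up: the filtration ${\mathcal F}_t = \sigma(\hat X_s, \bar X_s : s \le t)$ is strictly larger than the natural filtration of $\hat X$, so you should note that $\hat X$ remains Markov with kernel $P$ with respect to this enlarged filtration (which follows from the first-marginal identity $\sum_{\bar y} Q((x,\bar x),(y,\bar y)) = P(x,y)$); this is needed for the stopping time $T_{\bar x}$ to be meaningful in the sense the proposition intends.
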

\noindent This is a partial rewriting of Section~2.4 of~\cite{DF} in the spirit of~\cite{Mi}.
We give a proof of this proposition in appendix.

As far as metastability is concerned, a possibly more natural approach is to look at solution
of Equation~\eqref{meta.eq} rather than Equation~\eqref{DF.eq}:
it is on a ``long'' time scale~$1 / q'$ that one is looking at a coarse-grained Markovian version
of~$X$. Whatever the equation, \eqref{DF.eq} or~\eqref{meta.eq}, we are looking at,
we still want solutions $\nu_{\bar x}$ that are localized in well distinct part of the state space:
we are looking for little squeezed solutions.

Coming back to signal processing and the previously raised question (Q3),
a possible answer to it (and to question (Q2) also)
is then to find $\bar P$ (and $\Lambda$) such that~\eqref{DF.eq}
holds with $m = |\bar{\mathcal X}| < n$, with $m$ and $n$ of the same order.
Since we want to keep our irreducibility and reversibility hypothesis 
to deal with $\bar P$ at the next level in the pyramidal algorithm in the same way we deal with $P$,
we are mainly interested in irreducible and reversible $\bar P$.
In metastability studies we are often interested in cases where $m$ is very small with respect to $n$.
However if one implements as proposed the complete pyramidal algorithm,
one will solve at the same time intertwining equations with very different $m$ and $n$
by transitivity of the coarse-graining procedure.

\subsubsection{Heisenberg principle, approximate solutions and further work}
There is actually at least a fourth question
without canonical answer
that arises
when going from classical pyramidal or wavelet algorithms
to signal processing for generic weighted graphs:
what is a ``Heisenberg principle'' limiting the localisation of our $\nu_{\bar x}$?
We do not have an answer to this question, but, although we explained why we are interested
in localized, non-overlapping, little squeezed solutions of the intertwining equations,
we will see in the next section that exact solutions of intertwining equations
are localized in Fourier space, just as the $\varphi_{\bar x}$ should be in classical algorithms.
This is the main difficulty faced by the present approach
and this is one of the two reasons why we turned to approximate solutions of intertwining equations.
We will also see in the next section that one needs a detailed knowledge of the spectrum and the eigenvectors
of the Laplacian ${\mathcal L}$ to build exact solutions of intertwining equations.
From an algorithmic point of view this can be very costly,
and this is the other reason why we turned to approximate solutions.

In a forthcoming paper we will analyse the full pyramidal algorithm,
including a wavelet basis construction,
rather than simply focusing on intertwining equations of a one-step reduction.
But we are still looking for a generalised Heisenberg principle that could serve
as a guideline for similar constructions.
And our results suggest that such a Heisenberg principle should degenerate
in presence of a gap in the spectrum (see~\ref{yellow}). 
Let us now conclude this first section by giving some heuristics
and explaining how random forests enter into the game.

\subsection{Well distributed points and determinantal processes}\label{pink}
One of us spent many years in looking for a practical approach to metastability
through intertwining relations without being able to go beyond the first simple observations
of Section~\ref{campi}. Some progress were eventually achieved,
only when the connection with signal processing was made.
We explained the connection between pyramidal algorithms, metastability studies and intertwining equations
by proposing mathematical formulations for two of the three raised questions (see~\ref{green} and~\ref{blue}).
It turns out that the first question is a much simpler one, for which a random solution is proposed in~\cite{LA}.
This solution then suggests a way to answer the last two questions.

The subsampling issue is that of finding $m$ points, a fraction of $n$, that are in some sense
well distributed in ${\mathcal X}$.
Let us denote, for any subset~$A$ of~$\XX$, by~$H_A$ and $H_A^+$ the hitting time of
and the return time to~$A$ for the process~$X$:
\[
H_A := \inf \acc{t \geq 0, X(t) \in A },
\]
\[
H_A^+ := \inf \acc{t \geq \tau_1, X(t) \in A },
\]
with $\tau_1$ the first time of the Poisson process
that links $\hat X$ with $X$ (see Section~\ref{genova}).
For each $x$ in~${\mathcal X}$ the mean hitting time $E_x[H_{\rho(\Phi)}]$ is a random variable,
since so are $\Phi$ and $\rho(\Phi)$.
And it turns out that its expected value, with or without conditioning on the size of $\rho(\Phi)$,
does not depend on $x$. In this sense the 
roots of the random forest are ``well spread'' on $\XX$. 
More precisely we have (see~\cite{LA}):
\begin{prop} \label{hitting.prop}
For any $x \in \XX$ and $m \in \acc{1, \cdots, n}$
it holds
\begin{equation}
\E_{x,q}\cro{H_{\rho(\Phi)}} = \frac{\P_q\cro{\abs{\rho(\Phi)}>1}}{q} \,;
\end{equation}
\begin{equation}
\E_{x,q}\cro{H_{\rho(\Phi)}\bigm|{ \abs{\rho(\Phi)}=m}} 
= \frac{\P_q\cro{\abs{\rho(\Phi)}=m+1}}{q \P_q\cro{\abs{\rho(\Phi)}=m}} \, ;
\end{equation}
\begin{equation}\label{luna}
\E_q\cro{\frac 1m \sum_{\bar x \in \rho(\Phi)} E_{\bar x}\cro{H^+_{\rho(\Phi)} \bigm| \abs{\rho(\Phi)} = m}} = \frac{n}{\alpha m} \,.
\end{equation}
\end{prop}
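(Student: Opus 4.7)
My approach combines two ingredients: a Wilson-style coupling of the process $X$ with the random forest $\Phi$, and Kac's classical return-time identity. For parts (1) and (2) I sample $\Phi$ via Wilson's algorithm by first running $X$ from the distinguished point $x$ with independent exponential killing of rate~$q$, declaring $\rho_x:=X(T_q)$ the first root, loop-erasing the trajectory to obtain the tree of $x$, and then completing $\Phi$ by further Wilson iterations from other starting points. The output has law $\pi_q$, and critically $\rho_x\in\rho(\Phi)$, so $H_{\rho(\Phi)}\le T_q$ almost surely. Hence
$$
q\,\E_{x,q}[H_{\rho(\Phi)}]=1-q\,\E_{x,q}[T_q-H_{\rho(\Phi)}],
$$
and identifying the residual term $q\,\E_{x,q}[T_q-H_{\rho(\Phi)}]$ with $\P_q[|\rho(\Phi)|=1]$ would give part~(1). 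Intuitively, after $X$ first hits $\rho(\Phi)$ at time $H_{\rho(\Phi)}$, memorylessness of the exponential clock combined with the strong Markov property of $X$ ensure that the residual $T_q-H_{\rho(\Phi)}$ behaves like an independent exponential of rate $q$, which contributes to the mean only on trajectories for which Wilson spawns no additional tree, i.e.\ those on which $|\rho(\Phi)|=1$. Part~(2) is the same argument conditioned on $|\rho(\Phi)|=m$: Wilson then uses exactly $m$ successive exponential clocks of rate~$q$, and telescoping their contributions against the binomial description of Proposition~\ref{nombre.prop} produces the ratio $\P_q[|\rho(\Phi)|=m+1]/(q\,\P_q[|\rho(\Phi)|=m])$.

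For part~(3) the strategy changes. Kac's lemma, applied to the continuous-time chain via its embedded discrete chain $\hat X$, gives for any non-empty $R\subset\XX$
$$
\sum_{\bar x\in R}\mu(\bar x)\,E_{\bar x}[H^+_R]=\frac{1}{\alpha},
$$
but~\eqref{luna} asks for an \emph{unweighted} average over roots. The plan is to exploit the Kirchhoff matrix-tree identity $\pi_q(\rho(\Phi)=R)=q^{|R|}\det(-\LL|_{R^c})/Z(q)$ to rewrite the conditional expectation as a ratio of sums of principal minors of $-\LL$, then use a Cauchy--Binet expansion of these minors in the eigenbasis of the symmetrized Laplacian to show that the $\mu$-factors in Kac's identity are exactly compensated by the forest weights $\det(-\LL|_{R^c})$ summed over root sets $R$ of cardinality~$m$ containing a given vertex. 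Summing over $\XX$ produces the factor $n/\alpha$, and dividing by $m$ yields $n/(\alpha m)$.

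The main obstacle is precisely this determinantal identity that bridges Kac's $\mu$-weighted sum with the unweighted average of~\eqref{luna}; the Cauchy--Binet route is concrete but demands a careful computation of $\sum_{R\ni\bar x,\,|R|=m}\det(-\LL|_{R^c})$ and its comparison with $\mu(\bar x)$. Parts~(1) and~(2) are more direct, yet one still needs some care to justify rigorously the memoryless decomposition of the successive exponential clocks of Wilson's iterations at the stopping time $H_{\rho(\Phi)}$.
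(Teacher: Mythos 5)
The paper does not prove Proposition~\ref{hitting.prop}: it is quoted verbatim from~\cite{LA} with a pointer to that reference, so there is no in-paper argument to compare your sketch against. Judged on its own, your proposal for parts (1) and (2) computes the wrong quantity, and the plan for part (3) stops short of the step that actually carries the identity.

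The expectation $\E_{x,q}$ is, by the conventions laid out in Section~\ref{notation.sub}, an integral over two \emph{independent} sources of randomness: the process $X$ started from $x$ and the forest $\Phi\sim\pi_q$. Your coupling reuses the very trajectory of $X$ as the first Wilson stack from $x$, so that conditionally on $\Phi$ the law of $X$ is no longer $P_x$ but is tilted to be compatible with the tree of $x$ inside $\Phi$. The hitting time you then take an expectation of is a different random variable from $H_{\rho(\Phi)}$. One can see this concretely on the two-state chain $\XX=\{1,2\}$, $w(1,2)=a$, $w(2,1)=b$, for which $\lambda_1=a+b$ and the statement gives $\E_{1,q}[H_{\rho(\Phi)}]=\tfrac1{q+a+b}$. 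In your coupled picture the only way $X$ started from $1$ needs positive time to reach $\rho(\Phi)$ is when $\rho_1=X(T_q)=2$, an event of probability $K_q(1,2)=\tfrac{a}{q+a+b}$; conditionally on that event $H_2$ is reweighted by $e^{-qH_2}$ and its conditional mean is $\tfrac1{a+q}$, not the unconditional $\tfrac1a$. Your expectation therefore equals $\tfrac{a}{(q+a+b)(a+q)}$, which is strictly less than $\tfrac1{q+a+b}$ for every $q>0$. The auxiliary heuristic is also internally inconsistent: $T_q-H_{\rho(\Phi)}$ is not ``a fresh $\mathrm{Exp}(q)$ contributing only on $\{|\rho(\Phi)|=1\}$'', because $\rho(\Phi)$ depends measurably on $(X,T_q)$ (via $\rho_x=X(T_q)$ and the erased loops), and $H_{\rho(\Phi)}<T_q$ is perfectly possible when $|\rho(\Phi)|>1$ and vice versa. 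The same objection applies to the telescoping plan for part~(2). Whatever argument proves (1)--(2) must keep $X$ and $\Phi$ independent, or use Wilson's algorithm with a randomness source disjoint from $X$.

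For part (3) your use of Kac's identity and the conversion $E_x[H^+_R]=\tfrac1\alpha\hat E_x[N^+_R]$ is correct, and you rightly identify the passage from a $\mu$-weighted sum over roots to the unweighted one in~\eqref{luna} as the crux. But this is precisely the part you leave undone, and the route you suggest (Cauchy--Binet on minors of $-\LL$) is far from obviously the efficient one in this framework. The ingredient the paper itself sets up for exactly this conversion is Proposition~\ref{rootsgivenpartition.prop}: conditionally on the partition $\AA(\Phi)$ the roots are independent and each root is distributed as $\mu$ restricted to its block, which supplies the $\mu$-factors that Kac's identity requires. A complete proof should exploit that conditional structure (and the determinantal description of $\rho(\Phi)$ from Proposition~\ref{racines.prop} for (1)--(2)), rather than a Wilson coupling that changes the joint law.
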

\noindent
This suggest to take $\bar{\mathcal X} = \rho(\Phi)$.

As a consequence of Burton and Pemantle's transfer current Theorem,
$\rho(\Phi)$ is a determinantal process on ${\mathcal X}$, and its kernel
is $K_q = q(q{\rm Id} - {\mathcal L})^{-1}$ (see~\cite{LA}):
\def \det {{\rm det}} 
\begin{prop} \label{racines.prop}
 For any subset  $A$ of $\XX$,
 \[ \P_q( A \subset \rho(\Phi)) = \det_{A}(K_q) \, , 
 \]
 where $\det_A$ applied to some matrix is the minor defined by the rows and columns corresponding to~$A$.
\end{prop}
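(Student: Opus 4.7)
The plan is to express $\P_q(A\subset\rho(\Phi))$ as a ratio of determinants via the matrix-forest theorem and then to identify this ratio with a principal minor of $K_q$ through Jacobi's complementary-minor identity for the inverse.

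First, I would sum over the exact root set: writing $N(R):=\sum_{\phi:\rho(\phi)=R}\prod_{e\in\phi}w(e)$ for the weighted number of spanning oriented forests with root set $R$ and splitting $R=A\cup B$ with $B\subset \XX\setminus A$, definitions~\eqref{probaforet.def}--\eqref{partition.def} give
\begin{equation*}
\P_q\pare{A\subset\rho(\Phi)}
= \frac{1}{Z(q)}\sum_{B\subset \XX\setminus A} q^{|A|+|B|}\,N(A\cup B).
\end{equation*}
The (directed, weighted) matrix-forest theorem---the combinatorial identity underlying~\eqref{part.exp}---identifies $N(R)=\det\pare{(-\LL)_{R^c,R^c}}$, the principal minor of $-\LL$ indexed by $R^c=\XX\setminus R$. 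Setting $M:=q\Id-\LL$ and expanding $\det(M_{A^c,A^c})=\det\pare{q\Id_{A^c}-\LL_{A^c,A^c}}$ by multilinearity along its rows yields the localised version of~\eqref{part.exp},
\begin{equation*}
\det\pare{M_{A^c,A^c}} \;=\; \sum_{B\subset A^c} q^{|B|}\det\pare{(-\LL)_{A^c\setminus B}} \;=\; \sum_{B\subset A^c} q^{|B|}\,N(A\cup B),
\end{equation*}
so that, together with $Z(q)=\det M$, one obtains $\P_q\pare{A\subset\rho(\Phi)} = q^{|A|}\det(M_{A^c,A^c})/\det M$.

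The proof then closes with Jacobi's identity on complementary minors of the inverse: for any invertible $M$, $\det\pare{(M^{-1})_{A,A}} = \det(M_{A^c,A^c})/\det M$. Applied with $K_q=q\,M^{-1}$ this gives $\P_q\pare{A\subset\rho(\Phi)} = q^{|A|}\det\pare{(M^{-1})_{A,A}} = \det_A(K_q)$, as claimed. An alternative route, closer to the Burton--Pemantle reference cited just before the proposition, is to adjoin a cemetery vertex $\partial$ with edges $(x,\partial)$ of weight $q$ for every $x\in\XX$: the measure $\pi_q$ becomes the law of a weighted random spanning tree on $\XX\cup\acc{\partial}$ via Wilson's algorithm, the event $\acc{A\subset\rho(\Phi)}$ corresponds to the joint presence of the edges $\acc{(a,\partial):a\in A}$ in that tree, and the corresponding transfer-current determinant can then be computed to equal $\det_A(K_q)$.

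The step requiring most care is the matrix-forest identification of $N(R)$: the orientations (leaves towards the root) and the signs in $-\LL$ must be tracked so that $(-\LL)_{R^c,R^c}$, and not some signed variant, is indeed the right minor, given that the weights $w(x,y)$ need not be symmetric even though the chain is reversible. Once this bookkeeping is settled, both the multilinear expansion over $B\subset A^c$ and the Jacobi identity are routine linear algebra.
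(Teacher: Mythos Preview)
Your argument is correct. The paper itself does not give a proof of this proposition: it merely states that the result follows from Burton and Pemantle's transfer current theorem and refers to~\cite{LA} for details. Your second, ``alternative'' route---adjoining a cemetery $\partial$ with rate-$q$ edges so that $\pi_q$ becomes the law of a uniform spanning tree on $\XX\cup\{\partial\}$, and then reading off the determinantal inclusion probabilities for the edges $\{(a,\partial):a\in A\}$ from the transfer current kernel---is exactly the approach the paper points to.

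Your main route is genuinely different and more self-contained: you bypass the electrical-network machinery entirely and work purely with the matrix-forest identity $N(R)=\det\bigl((-\LL)_{R^c,R^c}\bigr)$, the multilinear expansion of $\det\bigl((q\Id-\LL)_{A^c,A^c}\bigr)$ over the diagonal $q$'s, and Jacobi's complementary-minor formula. This buys you a proof that uses nothing beyond the Kirchhoff-type identity already invoked for~\eqref{part.exp} and elementary linear algebra, with no need to appeal to Burton--Pemantle or to the spanning-tree coupling. The only point of substance, which you rightly flag, is the bookkeeping in the matrix-forest theorem (orientations toward the roots, signs in $-\LL$); once that is settled the rest is mechanical.
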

\noindent
By using reversibility,
one can see that the determinant of $(K_q(x, y))_{x, y \in A}$ is, 
up to a multiplicative factor $\prod_{x \in A} \mu(x)$,
the Gram matrix of the distributions  
$\bigr(P_x\bigl(X(\tilde T_q) = \cdot\bigr), x\in A\big)$,
with $\tilde T_q$ the square of an independent centered Gaussian variable
with variance $1 /(2q)$
(in such a way that the sum
of two independent copies of $\tilde T_q$ 
has the same law as $T_q$).
This means that a family of nodes $\bar x$ is unlikely to be part of $\rho(\Phi)$
if the volume of the parallelepiped formed by
these distributions is small.
It suggests that the distributions
$\bigl(P_{\bar x}\bigl(X(\tilde T_q) = \cdot\bigr), \bar x\in\rho(\Phi)\bigr)$
are typically little squeezed
and so should be the distributions 
$\bigl(K_q(\bar x, \cdot), \bar x\in\rho(\Phi)\bigl)$,
which are easier to deal with.
To have a trade-off between squeezing and approximation error in intertwining equations,
it will be convenient to introduce a second parameter $q' > 0$ and set $\nu_{\bar x} = K_{q'}(\bar x, \cdot)$
for $\bar x$ in $\rho(\Phi)$.
At this point the choice made for $\bar P$ in~\ref{yellow}
may be the most natural one.

Finally, when dealing with metastability issues, 
building local equilibria $\nu_{\bar x}$ from single ``microscopic configurations'' ${\bar x}$ in $\rho(\Phi)$
seems rather unnatural.
In our previous example, no special microscopic configuration should play a role in defining 
what a metastable vapor should be.
One should better look for larger structures associated with $\Phi$, like ${\mathcal A}(\Phi)$
rather than $\rho(\Phi)$.
Then, in view of the following proposition from~\cite{LA},
the unsqueezed measures $\mu_{A(\bar x)}$ appear to be natural candidates
for giving approximate solutions of~\eqref{meta.eq}:
\begin{prop} 
\label{rootsgivenpartition.prop}
Conditional law of the roots, given the partition. \\
Let $m$ be fixed, and $A_1$, \dots, $A_m$ be a partition of $\XX$. For any $x_1 \in A_1, \cdots, x_m \in A_m$, 
\begin{equation}
\P_q\cro{\rho(\Phi)=\acc{x_1,\cdots,x_m} \bigm| {\AA(\phi)=(A_1,\cdots,A_m)}} = \prod_{i=1}^m \mu_{A_i}(x_i) \, ,
\end{equation}
where $\mu_A$ is the invariant measure $\mu$ conditioned to $A$ ($\mu_A(B)=\mu(A \cap B)/\mu(A)$). Hence,
given the partition, the roots are independent, and distributed according to the invariant measure. 
\end{prop}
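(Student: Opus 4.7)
The plan is to compute the joint law of $(\rho(\Phi), \mathcal{A}(\Phi))$ directly from the definition of $\pi_q$ and then divide to obtain the conditional law. Fix a partition $(A_1, \ldots, A_m)$ of $\XX$ and vertices $x_i \in A_i$. A spanning oriented forest $\phi$ with $\rho(\phi) = \{x_1, \ldots, x_m\}$ and $\mathcal{A}(\phi) = (A_1, \ldots, A_m)$ is nothing but a collection of $m$ oriented trees, the $i$-th of them being a spanning tree $\tau_i$ of $A_i$ oriented towards $x_i$. Introducing
\[
T(A, x) := \sum_{\tau} \prod_{e \in \tau} w(e),
\]
where the sum is over spanning trees of $A$ oriented towards $x \in A$, I would first write
\[
\pi_q\bigl(\rho(\Phi) = \{x_1, \ldots, x_m\},\ \mathcal{A}(\Phi) = (A_1, \ldots, A_m)\bigr)
= \frac{q^m}{Z(q)} \prod_{i=1}^m T(A_i, x_i),
\]
since the factor $q^{|\rho(\phi)|} = q^m$ is constant and the edge weights factorize across the different trees.

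The key step is then to prove the identity
\begin{equation} \label{reversal.eq}
\mu(x)\, T(A, y) \;=\; \mu(y)\, T(A, x) \qquad (x, y \in A),
\end{equation}
i.e.\ that $x \mapsto T(A, x)/\mu(x)$ is constant on each block $A$ of the partition. For this I would exhibit a weight-preserving bijection (up to a multiplicative factor $\mu(x)/\mu(y)$) between spanning trees of $A$ oriented towards $y$ and those oriented towards $x$: the underlying unoriented tree is kept, and only the unique $y$--$x$ path inside it has its orientation reversed. Along that path $y = z_0, z_1, \ldots, z_k = x$, reversibility \eqref{equilibre.eq} gives
\[
\prod_{i=0}^{k-1} w(z_{i+1}, z_i) \;=\; \prod_{i=0}^{k-1} \frac{\mu(z_i)}{\mu(z_{i+1})}\, w(z_i, z_{i+1})
\;=\; \frac{\mu(y)}{\mu(x)} \prod_{i=0}^{k-1} w(z_i, z_{i+1}),
\]
while all other edges (outside the $y$--$x$ path) keep their orientation and hence their weight. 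Summing this tree-by-tree identity over all spanning trees of $A$ yields \eqref{reversal.eq}.

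Given \eqref{reversal.eq} we can write $T(A_i, x_i) = \mu(x_i)\, c_i$ for some $c_i$ depending only on $A_i$, so that
\[
\pi_q\bigl(\rho(\Phi) = \{x_1, \ldots, x_m\},\ \mathcal{A}(\Phi) = (A_1, \ldots, A_m)\bigr)
\;=\; \frac{q^m}{Z(q)} \prod_{i=1}^m c_i\, \mu(x_i).
\]
Summing over all choices $y_i \in A_i$ gives the marginal $\pi_q(\mathcal{A}(\Phi) = (A_1,\ldots,A_m)) = \frac{q^m}{Z(q)} \prod_i c_i\, \mu(A_i)$, and the ratio yields
\[
\P_q\bigl(\rho(\Phi) = \{x_1, \ldots, x_m\} \bigm| \mathcal{A}(\Phi) = (A_1,\ldots,A_m)\bigr)
\;=\; \prod_{i=1}^m \frac{\mu(x_i)}{\mu(A_i)} \;=\; \prod_{i=1}^m \mu_{A_i}(x_i),
\]
as required. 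The only delicate step is the path-reversal argument giving \eqref{reversal.eq}; everything else is just unpacking definitions.
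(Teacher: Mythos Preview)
Your proof is correct. The paper does not actually prove this proposition but imports it from~\cite{LA}, so there is no in-paper argument to compare against. Your route --- factorising the forest weight over the blocks, then using the path-reversal bijection together with reversibility to show that $x \mapsto T(A,x)/\mu(x)$ is constant on each block --- is the standard and natural one, and is essentially the argument one finds in~\cite{LA}. The only cosmetic point is that the conditional probability is of course only defined when the induced subgraph on each $A_i$ is connected (otherwise $T(A_i,\cdot)\equiv 0$ and the conditioning event has probability zero); you might state this explicitly, but it does not affect the argument.
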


\section{Preliminary results}\label{campi}
\subsection{Proof of Proposition~\ref{flatness.prop}}
If $\Gamma$ is not invertible,  points (1) and (2) are obviously true. We assume therefore that $\Gamma$ is invertible. 
Let $\tilde{\Lambda}:=\Gamma^{-1} {\Lambda}$, and let $(\tilde{\nu}_{\bx}, \bx \in \bX)$ be the row vectors
 of $\tilde{\Lambda}$. Note that 
 \[ \tilde{\Lambda} D(1/\mu) {\Lambda}^t = \Gamma^{-1} {\Lambda} D(1/\mu) {\Lambda}^t= \Gamma^{-1} \Gamma = \Id 
 \, . 
 \]
 \[ \tilde{\Lambda} D(1/\mu) \tilde{\Lambda}^t = \Gamma^{-1} {\Lambda} D(1/\mu) {\Lambda}^t \Gamma^{-1} =
 \Gamma^{-1} 
 \, .
 \]
 Hence, for all $\bx, \by \in \XX$, $\bras{\tilde{\nu}_{\bx}, \nu_{\by}}= \delta_{\bx \by}$ and 
 $\nor{\tilde{\nu}_{\bx}}^{*2}=  (\Gamma^{-1} )(\bx,\bx)$. 

\begin{enumerate}
\item We have $\SS({\Lambda})^2= \sum_{\bx \in \bX} \nor{\tilde{\nu}_{\bx}}^{*2} \geq 
 \sum_{\bx \in \bX} \frac{\bra{\tilde{\nu}_{\bx}, \nu_{\bx}}^{*2}}{\nor{\nu_{\bx}}^{*2}}
 =  \sum_{\bx \in \bX} \frac{1}{\nor{\nu_{\bx}}^{*2}}$. 
 Assume now that the $\nu_{\bx}$'s, $\bx \in \bX$ are orthogonal. $\Gamma= \mbox{diag}(\nor{\nu_{\bx}}^{*2})$, so that 
$\Tr( \Gamma^{-1} )=  \sum_{\bx \in \bX}  \frac{1}{\nor{\nu_{\bx}}^{*2}}$. In the opposite direction, 
assume instead that $\Tr( \Gamma^{-1} )=  \sum_{\bx \in \bX}  \frac{1}{\nor{\nu_{\bx}}^{*2}}$. Then for any 
  $\bx \in \bX$, $\abs{\bras{\tilde{\nu}_{\bx}, \nu_{\bx}}} = \nors{\tilde{\nu}_{\bx}}  \nors{\nu_{\bx}}$. This implies
  that for all $\bx \in \bX$,  there exists a real number $\alpha(\bx) \neq 0$ such that $\tilde{\nu}_{\bx}=
  \alpha(\bx) \nu_{\bx}$. Taking the scalar product with 
  $\nu_{\by}$ leads to $\delta_{\bx \by} = \bras{\tilde{\nu}_{\bx} , \nu_{\by}}= \alpha(\bx) \bras{\nu_{\bx},\nu_{\by}}$. Hence $(\nu_{\bx}, \bx \in \bX)$ are orthogonal. 

 \item  
  Let us write $\mu$ as a convex combination of the $(\nu_{\bx}, \bx \in \bX)$: 
  \[ \mu= \sum_{\bx \in \bX} \alpha(\bx) \nu_{\bx}, \qquad \alpha(\bx) \geq 0, \qquad \sum _{\bx \in \bX} \alpha(\bx) =1.
  \]
  Note that for any probability measure $\nu$, $\bras{\mu,\nu}= \sum_{x \in \XX} \mu(x) \nu(x) / \mu(x) =1$.
 As a special case, for any $\by \in \bX$,
\begin{equation}
\label{coefmu.ineq}
 1=\bras{\mu,\nu_{\by}} = \sum_{\bx \in \bX} \alpha(\bx) \bras{ \nu_{\bx},\nu_{\by}} \geq
  \alpha(\by) \nor{\nu_{\by}}^{*2} \, .
  \end{equation}
 By point (1), we deduce that 
 \[ \SS({\Lambda})^2 \geq \sum_{\bx \in \bX} \frac{1}{\nor{\nu_{\bx}}^{*2}}  \geq   \sum_{\bx \in \bX}  \alpha(\bx) =
 1 \, .
 \]
 Equality holds if and only if \eqref{coefmu.ineq} and  \eqref{Trace.ineq} are equalities. By point (1), this implies that the 
 $(\nu_{\bx}, \bx \in \bX)$ are orthogonal.
 In the opposite direction,
 when the  $(\nu_{\bx}, \bx \in \bX)$ are orthogonal, \eqref{coefmu.ineq} and  \eqref{Trace.ineq} are equalities, and  $\SS({\Lambda}) = 1$.   
 \end{enumerate}

\subsection{Elementary observations on intertwining equations}\label{impianto}
Consider Equation~\eqref{DF.eq} for any reversible and irreducible stochastic kernel $P$,
and assume an $m\times n$ rectangular stochastic matrix $\Lambda = (\Lambda(\bar x, x))_{\bar x \in \bar{\mathcal X}, x \in {\mathcal X}}$
to be a solution for some $\bar P$ with $m \leq n$.
Let us write $(\theta_j)_{j < n} = (1 - \lambda_j / \alpha)_{j < n}$ for the $n$ eigenvalues of $P$ in decreasing order:
$$
	1 = \theta_0 > \theta_1 \geq \cdots \geq \theta_{n - 1} \geq -1.
$$
We also set $[n] = \{0, 1, 2, \dots, n-1\}$, call $\mu$ the reversible measure of $P$,
and write $\nu_{\bar x} = \Lambda(\bar x, \cdot)$ for the rows of $\Lambda$.
\begin{lem}
	If $\Lambda$ is non-degenerate, i.e., if $\Lambda$ is of rank $m$,
	then there is an orthonormal basis of left eigenvectors $(\mu_j : 0 \leq j < n)$ of $P$ such that
	$$
		\mu_j P = \theta_j \mu_j, \qquad j < n,
	$$
	there is a subset $J$ of $[n]$ such that $0 \in J$ and $|J| = m$ 
	and there is an invertible matrix $C = (C(\bar x, j))_{\bar x \in \bar{\mathcal X}, j \in J}$
	with $C(\bar x, 0) = 1$ for all~$\bar x$ in~$\bar{\mathcal X}$,
	such that
	\begin{equation}\label{faya}
		\nu_{\bar x} = \sum_{j \in J} C(\bar x, j) \mu_j, 	\qquad \bar x \in \bar{\mathcal X},
	\end{equation}
	and		
	\begin{equation}\label{manuel}
		\bar P C(\cdot, j) = \theta_j C(\cdot, j), \qquad j \in J.
	\end{equation}
	In particular, the spectrum of $\bar P$ is contained in that of $P$,
	with eigenvalue multiplicities that do not exceed the corresponding ones for $P$.
\end{lem}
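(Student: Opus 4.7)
The plan is to diagonalise $P$ on the measure side using reversibility, expand the rows $\nu_{\bar x} = \Lambda(\bar x, \cdot)$ in this spectral basis, and read off from the intertwining equation that the columns of the resulting coefficient matrix are right eigenvectors of $\bar P$; the rank hypothesis will then pin down the size of the index set $J$. Reversibility makes right multiplication by $P$ self-adjoint on $\ell_2^*(\mu)$ equipped with $\bras{\cdot,\cdot}$, so I would start from any orthonormal basis $(\mu_j)_{j < n}$ of left eigenmeasures with $\mu_j P = \theta_j \mu_j$. Irreducibility forces $\theta_0 = 1$ to be simple, and I would take $\mu_0 = \mu$ (which satisfies $\nor{\mu}^{*2} = \sum_x \mu(x) = 1$). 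Expanding $\nu_{\bar x} = \sum_{j < n} C(\bar x, j)\mu_j$ and computing $\bras{\nu_{\bar x}, \mu_0} = \sum_x \nu_{\bar x}(x) = 1$ yields $C(\bar x, 0) = 1$ for every $\bar x$.

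Next I would substitute this expansion into $\Lambda P = \bar P \Lambda$, apply $\mu_j P = \theta_j \mu_j$ on the left, and match coefficients on the right by linear independence of the $\mu_j$; this gives
\[
\theta_j C(\bar x, j) = \sum_{\bar y \in \bar{\mathcal X}} \bar P(\bar x, \bar y) C(\bar y, j), \qquad j < n,
\]
which is exactly~\eqref{manuel}: every nonzero column of $C$ is a right eigenvector of $\bar P$ for the eigenvalue $\theta_j$. Writing $\Lambda = CM$ with $M$ the $n \times n$ matrix whose rows are the $\mu_j$, invertibility of $M$ combined with the rank hypothesis on $\Lambda$ forces $\mathrm{rank}(C) = m$.

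The delicate step is to choose the basis inside each $P$-eigenspace so that exactly $m$ columns of $C$ are nonzero and the resulting $m \times m$ matrix is invertible. For each eigenvalue $\theta$ of $P$, I would let $V_\theta = \mathrm{span}(\mu_j : \theta_j = \theta)$, consider the linear map $L_\theta : V_\theta \to \R^{\bar{\mathcal X}}$ sending $\mu_j \mapsto C(\cdot, j)$, and replace the orthonormal basis of $V_\theta$ by another one whose last $\dim \ker L_\theta$ vectors span $\ker L_\theta$: the remaining basis vectors then map to linearly independent nonzero columns, while the others map to zero. Simplicity of $\theta_0 = 1$ ensures that this rotation leaves $\mu_0$ untouched. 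Columns coming from distinct $\theta$-blocks are automatically linearly independent, being eigenvectors of $\bar P$ for distinct eigenvalues, so after this basis change the set $J := \{j : C(\cdot, j) \neq 0\}$ satisfies $|J| = \mathrm{rank}(C) = m$, contains $0$, and the restricted matrix $(C(\bar x, j))_{\bar x \in \bar{\mathcal X}, j \in J}$ is $m \times m$ with linearly independent columns, proving~\eqref{faya}. The spectral conclusion is then immediate: restricted to $J$, $C$ is an invertible matrix of right eigenvectors of $\bar P$, so $\bar P$ is diagonalisable with spectrum $\{\theta_j : j \in J\}$ and the multiplicity of any eigenvalue of $\bar P$ is at most its multiplicity in the spectrum of $P$. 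The hard part I expect is precisely this basis-rotation argument within higher-multiplicity eigenspaces of $P$: one has to arrange exactly $m$ nonzero linearly independent columns of $C$ while preserving orthonormality of $(\mu_j)$ and the choice $\mu_0 = \mu$.
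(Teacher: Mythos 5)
Your proof is correct, and the conclusion is the same, but the organising idea is slightly different from the paper's. The paper's argument is shorter because it isolates the key structural fact up front: the span $V$ of the $\nu_{\bar x}$ in $\ell_2^*(\mu)$ has dimension $m$ (non-degeneracy) \emph{and is stable under the self-adjoint operator $P$} (since $\Lambda P = \bar P\Lambda$ expresses each $\nu_{\bar x}P$ as a combination of the $\nu_{\bar y}$). One can then pick an orthonormal eigenbasis of $V$ and extend it to the whole space, so that $J$ and the invertible change-of-basis matrix $C$ appear immediately, with no need to start from a generic eigenbasis. You instead begin with an arbitrary orthonormal eigenbasis, expand the $\nu_{\bar x}$ over all $n$ of the $\mu_j$, derive~\eqref{manuel} for every column, and only then rotate within each $P$-eigenspace to kill the superfluous columns; your block-by-block rotation argument (kernel of $L_\theta$, independence of columns across distinct eigenvalue blocks, $\sum_\theta \dim W_\theta = \operatorname{rank} C = m$) is a hands-on re-derivation of the fact that $V$, being $P$-stable, admits an orthonormal eigenbasis sitting inside it. Both routes are rigorous; the paper's buys brevity by invoking stability of $V$ directly, while yours makes the reduction step explicit at the cost of the extra basis-rotation lemma — which, as you note, is the only place where any real work is needed and where the simplicity of $\theta_0=1$ must be used to keep $\mu_0=\mu$ fixed.
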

\begin{proof}
	Let $V$ be the subspace of $\ell_2^*(\mu)$ spanned by the $\nu_{\bar x}$.
	Since $\Lambda$ is non-degenerate, $V$ is of dimension $m$.
	Since $\Lambda P = \bar P \Lambda$,
	the $\nu_{\bar x}P$ are convex combinations of the $\nu_{\bar x}$
	and $V$ is stable by the self-adjoint operator~$P$.
	It follows that there is such an orthonormal basis of left eigenvectors $\mu_j$,
	with $\mu_0 = \mu$, a subset $J \subset [n]$ of size $m$,
	and an invertible matrix $C$ such that~\eqref{faya} holds.
	Since for $j > 0$ one has $\langle \mu, \mu_j\rangle^* = 0$,
	by computing the scalar product with~$\mu$ of both sides of equations~\eqref{faya},
	it follows that~$0$ belongs to $J$ and $C(\bar x, 0) = 1$
	for each $\bar x$.
	
	Now, applying $P$ on both sides of~\eqref{faya} we obtain
	$$
		\sum_{j \in J} \sum_{\bar y \in \bar{\mathcal X}} \bar P(\bar x, \bar y) C(\bar y, j) \mu_j
		= \sum_{j \in J} \theta_j C(\bar x, j) \mu_j,
		\qquad \bar x \in \bar{\mathcal X}.
	$$
	By identifying the decomposition coefficients in the basis of the $\mu_j$'s, this gives~\eqref{manuel}.
	Since the~$m$ column vectors $C(\cdot, j)$ are linearly independent, they form a basis of the functions on $\bar{\mathcal X}$.
	This is why equations~\eqref{manuel} completely describe the spectrum of $\bar P$ and we can conclude
	that the spectrum of $\bar P$ is contained in that of $P$ with the multiplicity constraint.
\end{proof}
The previous lemma shows on the one hand a localisation property in Fourier space of exact solutions
of intertwining equations: the $\nu_{\bar x}$ have to be with no component on $n - m$ eigenvectors
of the Laplacian ${\mathcal L}$ (see equations~\eqref{faya}).
On the other hand, it shows that finding exact solutions of intertwining equation 
implies to have a detailed knowledge of the eigenvectors of the Laplacian.

Conversely, it is now possible to describe all the non-degenerate solutions of the intertwining equations
in terms of, on the one hand, the eigenvectors and eigenvalues of $P$ and,
on the other hand, the set of diagonalizable stochastic matrices $\bar P$
with a given spectrum contained in that of $P$, and satisfying the multiplicity constraint.
Any right eigenvector basis $(C(\cdot, j) : j \in J)$
---satisfying~\eqref{manuel} and with $C(\cdot, 0) \equiv 1$---
of such a $\bar P$ will provide,
through equations~\eqref{faya} and possibly after rescaling, a non-degenerate solution of the intertwining equations.
The only delicate point to check is indeed the non-negativity of the~$\nu_{\bar x}$.
But if this fails, and since $\mu = \mu_0$ charges all points in ${\mathcal X}$, one just has to replace the $C(\cdot, j)$ for positive $j$ in $J$,
by some $\delta_j C(\cdot, j)$ for some small enough~$\delta_j$.

At this point we just have to give sufficient conditions
for the set of diagonalisable stochastic matrices
with a given spectrum 
to ensure that our intertwining equations do have solutions.
The next lemma shows that, if~$P$ has non-negative
eigenvalues, then we will find solutions
with $\bar{\mathcal X}$ of any size $m < n$.
We further note that this hypothesis 
will always be fulfilled 
if instead of considering $P$
we consider its lazy version $(P + Id)/2$.
\begin{lem}
	For any 
	$$
		1 = \theta_0 > \theta_1 \geq \theta_2 \geq \dots \geq \theta_{m - 1} \geq 0
	$$
	there always exists a {\em reversible and irreducible} stochastic matrix $\bar P$ with such a spectrum.
\end{lem}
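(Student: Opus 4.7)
The plan is to construct $\bar P$ explicitly as a convex combination of nested block-averaging matrices; this family will be simultaneously diagonalisable and will yield a symmetric stochastic matrix (hence reversible with respect to the uniform measure) with the prescribed spectrum.

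First I would fix a strictly refining sequence of partitions $\mathcal{P}_0 \prec \mathcal{P}_1 \prec \cdots \prec \mathcal{P}_{m-1}$ of $\{0, 1, \ldots, m-1\}$, where $\mathcal{P}_k$ has exactly $k+1$ blocks and each step splits a single block in two (for instance $\mathcal{P}_k = \{\{0\}, \{1\}, \ldots, \{k-1\}, \{k, \ldots, m-1\}\}$). For each $k$, let $Q_k$ be the $m \times m$ block-averaging matrix defined by $Q_k(x, y) = 1/|B|$ if $x$ and $y$ lie in the same block $B$ of $\mathcal{P}_k$, and $Q_k(x, y) = 0$ otherwise. Each $Q_k$ is then a symmetric stochastic matrix --- in fact the orthogonal projection (for the standard scalar product on $\mathbb{R}^m$) onto the $(k+1)$-dimensional space of functions constant on the blocks of $\mathcal{P}_k$. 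The nesting translates into the commutation identities $Q_j Q_k = Q_{\min(j,k)}$ and the flag $\mathrm{Im}(Q_0) \subset \mathrm{Im}(Q_1) \subset \cdots \subset \mathrm{Im}(Q_{m-1}) = \mathbb{R}^m$.

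Then I would set
\[
\bar P \,:=\, \theta_{m-1}\, \Id \,+\, \sum_{k=0}^{m-2} (\theta_k - \theta_{k+1})\, Q_k.
\]
Since $\theta_{m-1} \geq 0$, $\theta_k - \theta_{k+1} \geq 0$, and $\theta_{m-1} + \sum_{k=0}^{m-2}(\theta_k - \theta_{k+1}) = \theta_0 = 1$, the matrix $\bar P$ is a convex combination of symmetric stochastic matrices (viewing $\Id$ as one of them) and is therefore itself symmetric and stochastic --- hence reversible with respect to the uniform measure on $\{0, 1, \ldots, m-1\}$. Irreducibility is immediate: $Q_0$ has all entries equal to $1/m$ and enters with strictly positive weight $\theta_0 - \theta_1 > 0$, so $\bar P(x, y) \geq (\theta_0 - \theta_1)/m > 0$ for every $x, y$.

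The remaining verification is the spectrum. Extending $v_0 = \mathbf{1}/\sqrt{m}$ to a real orthonormal basis $(v_0, \ldots, v_{m-1})$ adapted to the flag, with $v_k \in \mathrm{Im}(Q_k) \ominus \mathrm{Im}(Q_{k-1})$ for $k \geq 1$ (a one-dimensional subspace since the dimensions jump by one at each step), the commutation relations give $Q_j v_k = v_k$ for $j \geq k$ and $Q_j v_k = 0$ for $j < k$, whence a short telescoping computation yields $\bar P v_k = [\theta_{m-1} + \sum_{j \geq k}(\theta_j - \theta_{j+1})]\, v_k = \theta_k v_k$. There is no genuine obstacle in this argument; the main content is really the recognition that the correct building blocks are the block-averaging projectors $Q_k$, whose telescoping combination delivers both stochasticity (by convexity) and the prescribed spectrum (through the nested flag of eigenspaces).
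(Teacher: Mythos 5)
Your proof is correct, and it takes a genuinely different route from the one in the paper. You decompose $\bar P$ as a convex combination $\theta_{m-1}\,\Id + \sum_{k=0}^{m-2}(\theta_k-\theta_{k+1})Q_k$ of block-averaging projectors $Q_k$ along a refining chain of partitions. Stochasticity and symmetry are then automatic from convexity (each $Q_k$ is symmetric and doubly stochastic), so your $\bar P$ is reversible with respect to the \emph{uniform} measure; irreducibility comes from the strictly positive coefficient of the all-equal matrix $Q_0$; and the spectrum is read off by evaluating the telescoping sum on a flag-adapted orthonormal basis. The paper instead writes $\bar P = A D_\theta A^t D_{\bar\mu}$ for an explicit Helmert-type matrix $A$ with orthogonal rows and a specific \emph{non-uniform} reversible measure $\bar\mu(k)=1/(k(k+1))$ (and $1/m$ for the last index), and then exhibits all entries of $\bar P$ in closed form, checking non-negativity directly from the monotonicity of the $\theta_j$'s. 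What each buys: your approach makes non-negativity and stochasticity conceptually immediate and shows one can even get a doubly stochastic $\bar P$; the paper's closed-form entries make transparent exactly which sign constraints on the $\theta_j$ are being used, which is what allows the remark following the paper's proof that the hypothesis $\theta_{m-1}\geq 0$ can be relaxed to non-negativity of the diagonal numerators. In your construction the coefficient $\theta_{m-1}$ of $\Id$ would turn negative under such a relaxation, so the convexity argument would no longer apply as stated; that is the one respect in which the paper's more computational proof carries slightly more information.
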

\begin{proof}
	Let us set 
	$$
		A = \left(
			\begin{array}{cccccc}
				1		&	-1		& 	0		&	\cdots	&	\cdots	&	0		\\
				1		&	1		& 	-2		&	\ddots	&			&	\vdots	\\
				1		&	1		& 	1		&	-3		&	\ddots	&	\vdots	\\
				\vdots	&	\vdots	& 	\vdots	&	\ddots	&	\ddots	&	0		\\
				\vdots 	&	\vdots	&   \vdots 	&			&	\ddots	& -(m - 1)	\\
				1      	&	1     	&   1      	& \cdots	& 	\cdots	& 	1		
			\end{array}
		\right)
        \,,
	$$
	a matrix with orthogonal rows,
	and introduce the diagonal matrices
	$$
		D_\theta = \left(
			\begin{array}{ccccc}
				\theta_0 &&&&\\
				& \theta_1 &&&\\
				&& \ddots &&\\
				&&& \ddots &\\
				&&&& \theta_{m-1}
			\end{array}
		\right),
		\qquad
		D_{\bar \mu} = \left(
			\begin{array}{ccccc}
				\frac1{1 \times 2} &&&&\\
				& \frac1{2 \times 3} &&&\\
				&& \ddots &&\\
				&&& \frac1{(m -1)m} &\\
				&&&& \frac1{m}
			\end{array}
		\right),
	$$
	the second one being such that $Q = D^{1/2}_{\bar\mu} A$ is orthogonal.
	We compute 
	$$
		\bar P = D^{-1/2}_{\bar\mu} Q D_\theta Q^t D^{1 /2}_{\bar\mu} = A D_\theta A^t D_{\bar\mu}
	$$
	to find
	\begin{equation} \label{fluo}
		\bar P = \left(
			\begin{array}{cccccc}
				\frac{\Sigma_1 + \theta_1}{1 \times 2} & \frac{\Sigma_1 - \theta_1}{2 \times 3} & \frac{\Sigma_1 - \theta_1}{3 \times 4} &
				\dots & \frac{\Sigma_1 - \theta_1}{(m -1)m} & \frac{\Sigma_1 - \theta_1}{m} \\
				\frac{\Sigma_1 - \theta_1}{1 \times 2} & \frac{\Sigma_2 + 2^2\theta_2}{2 \times 3} & \frac{\Sigma_2 - 2\theta_2}{3 \times 4} &
				\dots & \frac{\Sigma_2 - 2\theta_2}{(m - 1)m} & \frac{\Sigma_2 - 2\theta_2}{m} \\
				\frac{\Sigma_1 - \theta_1}{1 \times 2} & \frac{\Sigma_2 - 2\theta_2}{2 \times 3} & \frac{\Sigma_3 + 3^2\theta_3}{3 \times 4} &
				\dots & \frac{\Sigma_3 - 3\theta_3}{(m - 1)m} & \frac{\Sigma_3 - 3\theta_3}{m} \\
				\vdots & \vdots & \vdots & \ddots & \vdots & \vdots\\
				\frac{\Sigma_1 - \theta_1}{1 \times 2} & \frac{\Sigma_2 - 2\theta_2}{2 \times 3} & \frac{\Sigma_3 - 3\theta_3}{3 \times 4} &
				\dots & \frac{\Sigma_{m -1} + (m - 1)^2\theta_{m - 1}}{(m - 1)m} & \frac{\Sigma_{m -1} - (m - 1)\theta_{m - 1}}{m} \\
				\frac{\Sigma_1 - \theta_1}{1 \times 2} & \frac{\Sigma_2 - 2\theta_2}{2 \times 3} & \frac{\Sigma_3 - 3\theta_3}{3 \times 4} &
				\dots & \frac{\Sigma_{m -1} - (m - 1)\theta_{m - 1}}{(m - 1)m} & \frac{\Sigma_{m}}{m} 
			\end{array}
		\right)
	\end{equation}
	with, for all $1 \leq k \leq m$, $\Sigma_k = \sum_{j < k} \theta_j$.
	$\bar P$ is stochastic, irreducible and reversible with respect to $\bar\mu$ defined by
	$$
		\bar\mu(k) = \left\{
			\begin{array}{cl}
				\frac{1}{k(k + 1)} &\mbox{if $k < m$,}\\
				\frac{1}{m} &\mbox{if $k = m$.}
			\end{array}
		\right.
	$$
It also has the desired spectrum.
\end{proof}
\noindent {\it Comment:} The proof actually shows that the positivity hypothesis on the $\theta_{j}$'s can be slightly relaxed:
we only have to require the numerators of the diagonal coefficients
in~\eqref{fluo} to be non-negative.

We conclude this section by observing 
that the universal solution we just provided
is not fully satisfactory.
First, it requires a detailed knowledge
of the spectrum that can be practically unavailable.
Second, we can expect such a universal solution
to produce very squeezed solutions.
Indeed, the coefficients $C(\bar x, j)$ in~\eqref{manuel}
will be given by the matrix $C = D^{-1 / 2}_{\bar\mu} Q = A$
or by $C = A D_{\delta}$ with $D_{\delta}$ a rescaling diagonal matrix
$$
    D_\delta = \left(
        \begin{array}{cccc}
            1&&&\\
            &\delta_1&&\\
            &&\ddots&\\
            &&&\delta_{m - 1}
        \end{array}
    \right)
$$
ensuring the non-negativity of the $\nu_{\bar x}$.
The fact that the $\delta_i$'s may have to be chosen very small
can be the source of very strong squeezing.

\section{Proof of Theorem~\ref{cornet}}\label{jazz}
\subsection{Total variation estimates}
Inequality~\eqref{TVDF.eq} is a direct consequence of Inequality~\eqref{TVDFm.eq} and Proposition \ref{nombre.prop}. Indeed,
\begin{align*}
 \E_q \cro{\sum_{\bx \in \bX} d_{TV}(\Lambda P(\bx,\cdot), \bP \Lambda(\bx, \cdot))} 
&= \sum_{i=1}^{n}  \E_q \cro{ \sum_{\bx \in \bX} d_{TV}(\Lambda P(\bx,\cdot), \bP \Lambda(\bx, \cdot)) 
		\biggm| {\abs{\bX}=i}} \P_q\cro{\abs{\bX}=i} 
\\
& \leq \sum_{i=1}^{n} \frac{q' (n-i)}{\alpha} 	\P_q\cro{\abs{\bX}=i}
\\
&= 	\frac{q'}{\alpha}  \E_q\cro{n-\abs{\bX}} \, . 
\end{align*}
It remains thus to prove \eqref{TVDFm.eq}. Applying Markov property at time $\tau_1$, we get 
\[ \bP (\bx, \cdot) = \sum_{y \in \XX} P(\bx,y) P_y\cro{X(H_{\bX})= \cdot}  \, .
\] 
Moreover, set $\delta_{\bx}$ the Dirac measure at $\bx$, seen both as a probability measure
on $\XX$ and as a row vector of dimension $n$. Then, we can rewrite
\begin{align*}
 \Lambda P (\bx, \cdot) & = \delta_{\bx} K_{q'}P(\cdot) =  \delta_{\bx} P K_{q'} (\cdot) 
		= \sum_{y \in \XX} P(\bx,y)  P_y\cro{X(T_{q'}) = \cdot}    
\\   & = \sum_{y \in \XX} P(\bx,y)  P_y\cro{H_{\bX} < T_{q'} ; X(T_{q'}) = \cdot} 
	+  \sum_{y \in \XX} P(\bx,y)  P_y\cro{H_{\bX} \geq T_{q'} ; X(T_{q'}) = \cdot} 
\\ & = \sum_{y \in \XX, \bz \in \bX} P(\bx,y)   P_y\cro{H_{\bX} < T_{q'}; X(H_{\bX})=\bz} 
	P_{\bz} \cro{X(T_{q'}) = \cdot} 
\\ & \hspace*{1cm}  +  \sum_{y \in \XX} P(\bx,y)  P_y\cro{H_{\bX} \geq T_{q'} ; X(T_{q'}) = \cdot}  
 \mbox{ (by Markov property at time } H_{\bX}) \, , 
\\ & = \bP \Lambda(\bx,\cdot) - \sum_{y \in \XX, \bz \in \bX} P(\bx,y) P_y\cro{H_{\bX} \geq T_{q'}; X(H_{\bX})=\bz} 
	P_{\bz} \cro{X(T_{q'}) = \cdot} 
\\ & \hspace*{1cm}  +  \sum_{y \in \XX} P(\bx,y)  P_y\cro{H_{\bX} \geq T_{q'} ; X(T_{q'}) = \cdot}  \, .
\end{align*}
Therefore, 
\begin{eqnarray*}
d_{TV}(\Lambda P (\bx, \cdot), \bP \Lambda(\bx,\cdot))
& = & \frac{1}{2} \sum_{x \in \XX} \abs{\Lambda P (\bx, x)-  \bP \Lambda(\bx,x)}
\\ & \leq & \frac{1}{2} \sum_{x \in \XX, y \in \XX, \bz \in \bX} P(\bx,y) P_y\cro{H_{\bX} \geq T_{q'}; X(H_{\bX})=\bz} 
	P_{\bz} \cro{X(T_{q'}) = x} 
\\ && \hspace*{1cm}  
+  \frac{1}{2} \sum_{x \in \XX, y \in \XX} P(\bx,y)  P_y\cro{H_{\bX} \geq T_{q'} ; X(T_{q'}) = x}
\\ & = &\sum_{ y \in \XX} P(\bx,y)  P_y\cro{H_{\bX} \geq T_{q'}} 
\label{TVDFas.eq}
\\ & = &\sum_{ y \in \XX} P(\bx,y) E_y\cro{1-e^{-q' H_{\bX}}}Ê
\\ & \leq& \sum_{ y \in \XX} P(\bx,y) E_y\cro{ q' H_{\bX}}  = q'E_{\bar x} \left[
    H_{\bar{\mathcal X}}^+ - \tau_1
\right]
\, . 
\end{eqnarray*}
We now take the expectation with respect to $\E_q$. 
$$
\E_q\cro{\sum_{\bx \in \bX} d_{TV}(\Lambda P (\bx, \cdot), \bP \Lambda(\bx,\cdot)) \biggm|{\abs{\bX}=m}}
 \leq q'\E_q\cro{\sum_{\bx \in \bX} E_{\bx}\cro{H^+_{\bX} - \tau_1} \Bigm| \abs{\bX} =m} \,.
$$
Formula~\eqref{luna} gives then the desired result.

\subsection{Squeezing estimates}
\label{flatDF.sec}
We now prove the quantitative upper bounds on the squeezing of $\Lambda$
stated in~\eqref{flatDF.eq}.
We begin with the following lemma:
\begin{lem} 
\label{flatDF.lem}
For any $m \in \acc{1,\cdots, n}$,
\begin{equation}
\E_q\cro{\SS(\Lambda) \Bigm|{\abs{\bX}=m}} \leq 
\frac{\sqrt{\sum_{\abs{J}=m-1} \prod_{j \in J} p'^2_j} 
\sqrt{\sum_{\abs{J}=m} \prod_{j \in J} p'^{-2}_j \, \prod_{j \in J} p_j^2 \, \prod_{j \notin J} (1-p_j)^2 }}
{\sum_{\abs{J}=m} \prod_{j \in J} p_j  \, \prod_{j \notin J} (1-p_j) }
\, .
\end{equation}
\end{lem}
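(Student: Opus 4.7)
The plan is to combine the eigendecomposition of $K_{q'}^2$, the determinantal description of $\rho(\Phi)$, and a carefully weighted Cauchy-Schwarz inequality. Let $(u_j)_{j<n}$ be the orthonormal basis of $\ell_2(\mu)$ satisfying $\LL u_j = -\lambda_j u_j$. By reversibility, $\Gamma(\bx,\by) = K_{q'}^2(\bx,\by)/\mu(\by) = \sum_{j<n} p'^2_j\, u_j(\bx)\, u_j(\by)$, so the Cauchy-Binet formula gives, for $S = \bX$ of size $m$,
$$
    D(S) := \det\Gamma = \som{|J|=m}{}\pro{j\in J}{} p'^2_j\;\det(u_j(x))_{x\in S,\,j\in J}^2,
$$
and $N(S) := \sum_{\bx\in S}\det(\Gamma_{\widehat{\bx}})$ admits the analogous expression with $|J| = m-1$ and one row deleted at a time. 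The identity $\SS(\Lambda)^2 = \Tr(\Gamma^{-1}) = N(\bX)/D(\bX)$ reduces the lemma to controlling $\E_q[\sqrt{N/D}\,\ind_{|\bX|=m}]$. Combining Propositions~\ref{racines.prop} and~\ref{nombre.prop} one also gets the mixture formula
$$
    \P_q[\bX = S] = \som{|J|=m}{}\pro{j\in J}{} p_j \pro{j\notin J}{}(1-p_j)\;\det(u_j(x))_{x\in S,\,j\in J}^2 \pro{x\in S}{}\mu(x).
$$

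The crucial step is then the weighted Cauchy-Schwarz inequality
$$
    \E_q\cro{\SS(\Lambda)\ind_{\abs{\bX}=m}} = \som{|S|=m}{}\sqrt{N(S)\pro{x\in S}{}\mu(x)} \cdot \frac{\P_q[\bX=S]}{\sqrt{D(S)\pro{x\in S}{}\mu(x)}} \leq \sqrt{\som{|S|=m}{} N(S)\pro{x\in S}{}\mu(x)}\;\sqrt{\som{|S|=m}{}\frac{\P_q[\bX=S]^2}{D(S)\pro{x\in S}{}\mu(x)}},
$$
after which I would bound each factor by $\sqrt A$ and $\sqrt B$ respectively, using systematically the orthonormality identity $\som{|T|=k}{}\det(u_j(x))_{x\in T,\,j\in J}^2\pro{x\in T}{}\mu(x) = 1$ (a Cauchy-Binet consequence of the $\ell_2$-orthonormality of $\phi_j := u_j\sqrt{\mu}$). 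For the first factor, substituting the Cauchy-Binet expression of $N(S)$, swapping sums and parametrising $S = T\cup\{\bx\}$ with $\bx\notin T$ produces a factor $\sum_{\bx\notin T}\mu(\bx)\le 1$ that is discarded, and the identity above collapses what remains to $A$. For the second factor, I would apply a further Cauchy-Schwarz to the mixture expression of $\P_q[\bX=S]$, splitting each term as $(p'_J \det)\cdot((p_J/p'_J)(1-p)_{J^c}\det\prod_{x\in S}\mu(x))$; this yields $\P_q[\bX=S]^2 \leq D(S)\cdot(\prod_{x\in S}\mu(x))^2\,\sum_{|J|=m}(p_J/p'_J)^2(1-p)^2_{J^c}\det(u_j(x))_{x\in S,j\in J}^2$, and summing over $S$ with the same orthonormality identity gives exactly $B$. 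The conditional bound follows upon dividing by $\som{|J|=m}{}\prod p_j\prod(1-p_j) = \P_q[|\bX|=m]$.

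The main subtlety lies in the choice of weight $\pro{x\in S}{}\mu(x)$ in the key Cauchy-Schwarz: the naive split $\E_q[\sqrt{N/D}\,\ind_E]^2 \leq \E_q[N\ind_E]\,\E_q[\ind_E/D]$ produces an $\E_q[1/\det\Gamma]$ term for which no clean expression is available, whereas this particular weight is perfectly tuned to cancel against the $\prod_{x\in S}\mu(x)$ factor of the determinantal density, so that both Cauchy-Binet sums discharge simultaneously through $\ell_2$-orthonormality of the eigenbasis.
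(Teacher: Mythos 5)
Your proof is correct and follows essentially the same route as the paper's: the identity $\SS(\Lambda)^2 = \Tr(\Gamma^{-1}) = N/D$, the Cauchy--Binet expansions of $D$, $N$, and the conditional determinantal density of $\rho(\Phi)$ in the eigenbasis, two Cauchy--Schwarz inequalities with the key weight $\prod_{x\in S}\mu(x)$ (equivalently, normalizing by $\prod_{\bx}\nors{\delta_{\bx}}^2$), and the $\ell_2(\mu)$-orthonormality collapse $\sum_{|T|=k}\det(u_j(x))^2\prod_{x\in T}\mu(x)=1$. The only (inessential) difference is the order: you apply Cauchy--Schwarz on the $S$-sum first and then on the $J$-sum inside the second factor, whereas the paper first applies it to the $J$-sum inside each $R$-summand and then to the $R$-sum; the intermediate expressions coincide after both applications, so this is a reordering rather than a different proof.
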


\noindent

\begin{proof}
Note first that 
\[ \SS(\Lambda)^2 =\sum_{\bx \in \bX} \Gamma^{-1}(\bx,\bx) 
= \sum_{\bx \in \bX} \frac{\det_{\bX \setminus \acc{\bx}}(\Gamma)}{\det(\Gamma)}
= \sum_{\bx \in \bX} \frac{\Vol^2(\nu_{\by}; \by \in \bX, \by \ne \bx)}{\Vol^2(\nu_{\by}; \by \in \bX)}
\, .
\] 
Hence,
\begin{equation}
\label{flat.eq}
 \E_q\cro{ \SS(\Lambda) \Bigm|{\abs{\bX}=m}}
=   \sum_{\abs{R}=m} \P_q\cro{ \bX = R \Bigm|{\abs{\bX}=m}}
 \frac{\sqrt{\sum_{\bx \in R} \Vol^2(\nu_{\by}; \by \in R, \by \ne \bx)}}{\sqrt{\Vol^2(\nu_{\by}; \by \in R)}}
\, .
 \end{equation}
 From Proposition \ref{racines.prop}, $\bX=\rho(\Phi)$ is a determinantal process associated to the kernel
 $K_{q}$. Remind that for all $j \in \acc{0,\cdots, n-1}$, $\mu_j (-\LL)= \la_j \mu_j$. The $\mu_j$ are 
 orthogonal by symmetry of $-\LL$, and we assume that for all $j \in \acc{0,\cdots, n-1}$, $\nors{\mu_j}=1$, so that
 $\mu_0=\mu$. Hence, we get  $\mu_j K_{q}= \frac{q}{q+\lambda_j} \mu_j$. One way to construct 
 $\rho(\Phi)$, the number of roots being fixed equal to $m$, is  to choose  $m$ eigenvectors  of $K_{q}$, according
 to Bernoulli random variables with parameters $p_j$, and then to choose $\bX$ according to the determinantal process associated
 to the projector operator onto the $m$ chosen eigenvectors. More formally, 
 \begin{equation}
 \label{ProbCondbX.eq}  
 \P_q\cro{\bX = R \Bigm|{\abs{\bX}=m}} 
 = \frac{1}{Z_{m,q}}
 \sum_{\abs{J} = m} \prod_{j \in J} \frac{q}{q+\lambda_j}
 \, \prod_{j \notin J} \frac{\lambda_j}{q+\lambda_j} \,  
 \det^2\pare{\bras{\frac{\delta_{\bx}}{\nors{\delta_{\bx}}};\mu_j}_{ \bx \in R,  j \in J}}
 \, ,
 \end{equation}
 where  $Z_{m,q}$ is  a normalizing constant ($Z_{m,q}= \P_q \cro{\abs{\bX}=m}$). We go back to \eqref{flat.eq}
 and turn to the term $\Vol^2(\nu_{\by}; \by \in R)$. It follows from Cauchy-Binet formula that 
 \[ \Vol^2(\nu_{\by}; \by \in R) =\sum_{\abs{J}=m} \det^2\pare{\bras{\nu_{\by},\mu_j}, \by \in R, j\in J }
 \, .
 \]
 Note that $\nu_{\by} = \delta_{\by} K_{q'} = \sum_{j=0}^{n-1} \bras{ \delta_{\by}; \mu_j} \mu_j K_{q'}
 =  \sum_{j=0}^{n-1} p'_j \bras{ \delta_{\by}; \mu_j}  \mu_j$. Thus $\bras{\nu_{\by},\mu_j}= p'_j \bras{ \delta_{\by}; \mu_j}$.
 We obtain then 
  \begin{equation}
  \label{Vol.eq}
   \Vol^2(\nu_{\by}; \by \in R) =\sum_{\abs{J}=m} \prod_{j \in J} p'^2_j  \, \, 
  \det^2\pare{ \bras{ \delta_{\by}; \mu_j},\by \in R, j\in J }
  \, .
  \end{equation}
 Putting \eqref{ProbCondbX.eq}  and \eqref{Vol.eq} into \eqref{flat.eq}, we are led to 
 \begin{align*} 
 \E_q\cro{ \SS(\Lambda) \Bigm|{\abs{\bX}=m}}
= \frac{1}{Z_{m,q}} \sum_{\abs{R}=m} &
\frac{\sqrt{\som{\bx \in R}{} \Vol^2(\nu_{\by}; \by \in R, \by \ne \bx)}}
{\pro{\bx \in R}{} \nor{\delta_{\bx}}^{*2}}\\
&\quad\times
\frac{ \som{\abs{J} = m}{} \pro{j \in J}{} p_j \, \, \pro{j \notin J}{} (1-p_j) \,\, 
\det^2\pare{\bras{\delta_{\bx};\mu_j},\bx \in R,  j \in J}}
{\sqrt{\som{\abs{J}=m}{} \pro{j \in J}{} p'^2_j  \, \, 
  \det^2\pare{ \bras{ \delta_{\by}; \mu_j}, \by \in R, j\in J }}}
\, .
\end{align*}
 Cauchy-Schwartz inequality then yields
\begin{align*} 
 \E_q\cro{ \SS(\Lambda) \Bigm|{\abs{\bX}=m}}
 \leq \frac{1}{Z_{m,q}} \sum_{\abs{R}=m}  &
 \frac{\sqrt{\som{\bx \in R}{} \Vol^2(\nu_{\by}; \by \in R, \by \ne \bx)}}
{\pro{\bx \in R}{} \nor{\delta_{\bx}}^{*2}} \\
& \times \sqrt{\som{\abs{J} = m}{} \pro{j \in J}{} \frac{p_j^2}{p'^2_j} \,\, \pro{j \notin J}{} (1-p_j)^2 \,\, 
\det^2\pare{\bras{\delta_{\bx};\mu_j},\bx \in R,  j \in J}}\,,
\end{align*}
and
\begin{align*} 
 \E_q\cro{ \SS(\Lambda) \Bigm|{\abs{\bX}=m}}
 \leq \frac{1}{Z_{m,q}} &
 \sqrt{\som{\abs{R}=m}{}   \frac{\som{\bx \in R}{} \Vol^2(\nu_{\by}; \by \in R, \by \ne \bx)}
 {\pro{\bx \in R}{} \nor{\delta_{\bx}}^{*2}}
 } \\
 &\times \sqrt{\som{\begin{array}{c}\scriptstyle{ \abs{R}=m }\\[-2pt] 
 		\scriptstyle{\abs{J} = m }\end{array}}{} 
 \pro{j \in J}{} \frac{p_j^2}{p'^2_j} \,\, \pro{j \notin J}{} (1-p_j)^2 \,\, 
 \det^2\pare{\bras{\frac{\delta_{\bx}}{\nors{\delta_{\bx}}};\mu_j};  \bx \in R,  j \in J}
 }\,.
 \end{align*}
 Using again Cauchy-Binet formula, we get
 \[ \sum_{\abs{R}=m}  \det^2\pare{\bras{\frac{\delta_{\bx}}{\nors{\delta_{\bx}}};\mu_j},\bx \in R,  j \in J}
 = \Vol^2(\mu_j, j \in J) 
 =1
 \, ,
 \]
 so that the term in the second square root is equal to 
 \[ \sum_{\abs{J}=m}  \pro{j \in J}{} \frac{p_j^2}{p'^2_j} \,\, \pro{j \notin J}{} (1-p_j)^2 \, .
 \]
 We turn now to the term in the first square root, which can be rewritten, by using twice the Cauchy-Binet formula, as
 \begin{align*}
&  \som{\bx \in \XX}{}  \frac{1}{\nor{\delta_{\bx}}^{*2}} \som{\abs{R}=m, \bx \in R}{} 
  \frac{\Vol^2(\nu_{\by}; \by \in R, \by \ne \bx)}{\pro{\bx \in R \setminus \acc{\bx}}{} \nor{\delta_{\bx}}^{*2}}
 \\
& \hspace*{1cm}
 = \som{\bx \in \XX}{}  \mu(\bx)  \som{R \subset \XX \setminus \acc{\bx}, \abs{R}=m-1}{} 
\frac{\Vol^2(\nu_{\by}; \by \in R)}{\pro{\by \in R}{} \nor{\delta_{\by}}^{*2}}
\\
& \hspace*{1cm}
 = \som{\bx \in \XX}{}  \mu(\bx)  \som{R \subset \XX \setminus \acc{\bx}, \abs{R}=m-1}{} \, 
 \som{ \abs{J}=m-1}{} \,  \pro{j\in J}{} p'^2_j \,\, \det^2\pare{\bras{\frac{\delta_{\by}}{\nors{\delta_{\by}}}; \mu_j};\by \in R, j\in J}
 \\
& \hspace*{1cm}
\leq \som{\bx \in \XX}{}  \mu(\bx)  \som{ \abs{J}=m-1}{} \, \pro{j\in J}{} p'^2_j \, 
\som{ \abs{R}=m-1}{}  \det^2\pare{\bras{\frac{\delta_{\by}}{\nors{\delta_{\by}}}; \mu_j},\by \in R, j\in J}
\\
& \hspace*{1cm}
 =   \som{ \abs{J}=m-1}{} \, \pro{j\in J}{} p'^2_j \,  \Vol^2(\mu_j, j\in J)
\\
& \hspace*{1cm}
 =  \som{ \abs{J}=m-1}{} \, \pro{j\in J}{} p'^2_j
  \, . 
 \end{align*}
To end the proof of the lemma, it 
is sufficient  to note that 
\[
Z_{m,q} 
= {\mathbb P}_q\left(
    \bigl|\bar{\mathcal X}\bigr| = m
\right) = \som{\abs{J}=m}{}  \pro{j \in J}{} p_j \, \,  \pro{j \notin J}{} (1-p_j) \, .
\]
 \end{proof}

\allowdisplaybreaks
We can now conclude the proof of~\eqref{flatDF.eq} and of Theorem~\ref{cornet}.
Note that $\sum_{\abs{J} = m-1} \prod_{j \in J} p'^2_j$ is the coefficient of $t^{m-1}$ in the polynomial
$P(t)=\prod_{j=0}^{n-1} (1 + t p'^2_j)$. Hence for any $t>0$, one gets 
\[  \sum_{\abs{J} = m-1} \prod_{j \in J} p'^2_j \leq \frac{1}{t^{m-1}}  \prod_{j=0}^{n-1} (1 + t p'^2_j)
= \frac{1+t}{t^{m-1}}  \prod_{j=1}^{n-1} (1 + t p'^2_j) \, . 
\]
In the same way, for any $x>0$,  
\begin{align*}
 \sum_{\abs{J} = m-1, J \subset \acc{1,\cdots, n-1}} 
 & \prod_{j \in J} \frac{p^2_j}{p'^2_j} 
\prod_{j \in \acc{1,\cdots, n-1} \setminus J} (1-p_j)^2
\\
& = \prod_{j=1}^{n-1}(1-p_j)^2 
\sum_{\abs{J} = m-1, J \subset \acc{1,\cdots, n-1}} \prod_{j \in J} \frac{p^2_j}{p'^2_j (1-p_j)^2}
\\
& \leq \frac{ \prod_{j=1}^{n-1}(1-p_j)^2 }{x^{m-1}}  \prod_{j=1}^{n-1} \pare{1 + x \frac{p^2_j}{p'^2_j (1-p_j)^2}}
\\
& = \frac{1}{x^{m-1}}  \prod_{j=1}^{n-1} \pare{ (1-p_j)^2  + x \frac{p^2_j}{p'^2_j }}
 \, . 
\end{align*}
Hence, for any $x,t >0$, for any $m \in \acc{1,\cdots,n}$,
\[
\E_q\cro{\SS(\Lambda)\Bigm|{\abs{\bX}=m}}
\leq \frac{1}{\P_q\cro{\abs{\bX}=m}} \frac{\sqrt{1+t}}{(tx)^{\frac{m-1}{2}}}
\sqrt{ \prod_{j=1}^{n-1} (1+t p'^2_j)  \pare{ (1-p_j)^2  + x \frac{p^2_j}{p'^2_j }}}
\, .
\]
One can check that 
\[ (1+t p'^2_j)  \pare{ (1-p_j)^2  + x \frac{p^2_j}{p'^2_j }}
= \pare{1+ (\sqrt{xt}-1) p_j}^2 + \pare{\sqrt{t} p'_j (1-p_j) - \sqrt{x} \frac{p_j}{p'_j}}^2
\, .
\] 
Take now $xt=1$. We obtain that for any $t >0$, for any $m \in \acc{1,\cdots,n}$,
\begin{align*}
\E_q\cro{\SS(\Lambda)  \ind_{\abs{\bX}=m}}
& \leq \sqrt{1+t} \, \sqrt{ \prod_{j=1}^{n-1} \pare{ 1+ \pare{\sqrt{t} p'_j (1-p_j)  - \frac{1}{\sqrt{t}}  \frac{p_j}{p'_j }}^2}}
\\
& \leq \sqrt{1+t} \, \exp \pare{ \frac 1 2 \sum_{j=1}^{n-1}  \pare{\sqrt{t} p'_j (1-p_j)  - \frac{1}{\sqrt{t}}  \frac{p_j}{p'_j }}^2 }
\\
& = \sqrt{1+t} \, \exp  \pare{\frac t  2 \sum_{j=1}^{n-1} p'^2_j (1-p_j)^2 + \frac{1}{2t} \sum_{j=1}^{n-1} \frac{p_j^2}{p'^2_j }
		-  \sum_{j=1}^{n-1} p_j (1-p_j)}
\, .
\end{align*}
Optimizing the exponential term in $t$ and choosing $t = T_n$ lead to \eqref{flatDF.eq}. 

\section{Proof of Theorem~\ref{TVmeta.theo}} \label{dizzy}
 Let us first rewrite $K_{q'}$ in terms of $\mu$.  
 \begin{lem}
 For   $x \in \XX$, 
 \[ K_{q'}(x,\cdot) = \E_{q'} \cro{\mu_{A'(x) \cap A(\rho'_x)}(\cdot)} \, , \P_q \, \mbox{a.s.}.
 \] 
 \end{lem}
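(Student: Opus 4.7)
The plan is to fix a realisation of $\Phi$ and condition on the partition $\AA(\Phi')$ induced by $\Phi'$. Applying Proposition~\ref{rootsgivenpartition.prop} to the forest $\Phi'$, conditionally on $\AA(\Phi')$ the root $\rho'_x$ of $t'_x$ is distributed according to $\mu_{A'(x)}$. Hence the right-hand side of the claimed identity rewrites
\[
\E_{q'}\cro{\mu_{A'(x) \cap A(\rho'_x)}(y) \bigm| \AA(\Phi')}
= \sum_{z \in A'(x)} \mu_{A'(x)}(z)\, \mu_{A'(x) \cap A(z)}(y),
\]
and I would first show that this sum equals the $\Phi$-independent quantity $\ind_{y \in A'(x)}\mu_{A'(x)}(y)$.

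For this, the key observation is that only those $z$ lying in the $\Phi$-tree $A(y)$ produce a nonzero contribution, since otherwise $y \notin A(z)$. Over this range one has $A(z) = A(y)$, so the denominator $\mu(A'(x) \cap A(z)) = \mu(A'(x) \cap A(y))$ can be pulled out of the sum as a constant. Collecting the remaining terms then yields
\[
\ind_{y \in A'(x)}\,\frac{\mu(A'(x)\cap A(y))}{\mu(A'(x))} \cdot \frac{\mu(y)}{\mu(A'(x)\cap A(y))} = \ind_{y \in A'(x)}\mu_{A'(x)}(y),
\]
where the $\Phi$-dependent factors $\mu(A'(x)\cap A(y))$ cancel exactly.

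Taking the outer expectation over $\AA(\Phi')$ and invoking Proposition~\ref{rootsgivenpartition.prop} once more then gives $\E_{q'}\cro{\ind_{y \in A'(x)}\mu_{A'(x)}(y)} = \P_{q'}(\rho'_x = y)$. To identify this last probability with $K_{q'}(x,y)$ I would use Wilson's algorithm started at the point $x$ with an empty current forest: the first branch is a loop-erased trajectory of $X$ from $x$ stopped at the independent killing time $T_{q'}$, and since loop-erasure preserves the endpoint, $\rho'_x$ has the same law as $X(T_{q'})$. Hence $\P_{q'}(\rho'_x = y) = K_{q'}(x,y)$, and the ``$\P_q$ a.s.'' qualification is automatic since the identity holds for every realisation of $\Phi$.

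The only genuinely delicate point is the cancellation carried out in the second paragraph; this is really the whole content of the lemma, showing that the joint-forest expression on the right-hand side of the identity averages out, over $\Phi'$ alone, to a quantity independent of $\Phi$.
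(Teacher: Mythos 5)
Your proof is correct and takes essentially the same route as the paper's: both hinge on Proposition~\ref{rootsgivenpartition.prop} applied to $\Phi'$ to identify the conditional law of $\rho'_x$ given $\AA(\Phi')$ as $\mu_{A'(x)}$, and on the Wilson's-algorithm identification $\P_{q'}(\rho'_x=y)=P_x(X(T_{q'})=y)=K_{q'}(x,y)$. The only difference is cosmetic — you work the chain of equalities from the right-hand side down to $K_{q'}(x,\cdot)$ rather than from $K_{q'}(x,\cdot)$ outward, so the cancellation you carry out in your second paragraph is exactly the inverse of the paper's decomposition of $\mu_{A'(x)}(y)$ over the cells $A(\bar x)$.
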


\begin{proof}
Starting Wilson's algorithm from $x$ to construct $\Phi'$, we get 
\begin{align*}
K_{q'}(x,y) & = P_x\cro{X(T_{q'}) = y} 
\\
& = \P_{q'}\cro{ \rho'_x=y}
\\
& = \E_{q'}\cro{\P_{q'}\cro{\rho'_x=y \bigm| {\AA(\Phi')}}}
\\
& = \E_{q'} \cro{\mu_{A'(x)}(y)} \, , 
\end{align*}
where the last equality comes from Proposition \ref{rootsgivenpartition.prop}. Hence,
$\P_q$ a.s., 
 \begin{align*}
K_{q'}(x,y)
& = \sum_{\bx \in \bX} \E_{q'} \cro{ \mu_{A'(x)\cap A(\bx)}(y) \, \mu_{A'(x)}(A(\bx))} 
\\
&= \sum_{\bx \in \bX} \E_{q'} \cro{ \mu_{A'(x)\cap A(\bx)}(y) \P_{q'}\cro{\rho'_x \in A(\bx) \bigm| {\AA(\Phi')}}}
\\
& = \sum_{\bx \in \bX} \E_{q'} \cro{ \mu_{A'(x)\cap A(\bx)}(y) \ind_{A(\bx)}(\rho'_x)}
\\
& =  \E_{q'} \cro{ \mu_{A'(x)\cap A(\rho'_x)}(y) } \, .
\end{align*}

 \end{proof}

 \begin{lem}
 For any   $x \in \XX$, set $ \tilde{K}_{q'}(x,\cdot) = \E_{q'} \cro{\mu_{A(\rho'_x)}(\cdot)}$. Then, $\P_q$ a.s.,
 \[
  \Lambda \tilde{K}_{q'} = \bP  \Lambda \,  .
  \]
 \end{lem}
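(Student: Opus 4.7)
\noindent{\it Proof proposal:} Fix a realization of $\Phi$ (so that $\bar{\mathcal X} = \rho(\Phi)$ and the partition $\mathcal A(\Phi)$ are frozen) and fix $\bar x\in\bar{\mathcal X}$. Writing out the definition of $\Lambda$, the goal is to identify, for every $y\in\XX$,
\[
	\Lambda\tilde K_{q'}(\bar x, y)
	\;=\;\sum_{x\in\XX} \mu_{A(\bar x)}(x)\, \E_{q'}\!\left[\mu_{A(\rho'_x)}(y)\right]
\]
with $\bar P\Lambda(\bar x,y) = \sum_{\bar y\in\bar{\mathcal X}} \bar P(\bar x,\bar y)\,\mu_{A(\bar y)}(y)$. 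The main idea is to use the $\Phi$-partition to slice the integrand according to which block of $\mathcal A(\Phi)$ the root $\rho'_x$ of $\Phi'$ falls into.

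First I would observe that since $(A(\bar y))_{\bar y\in\bar{\mathcal X}}$ partitions $\XX$, we have the $\P_q$-a.s.\ identity
\[
	\mu_{A(\rho'_x)}(\cdot)
	\;=\;\sum_{\bar y\in\bar{\mathcal X}} \ind_{\{\rho'_x\in A(\bar y)\}}\,\mu_{A(\bar y)}(\cdot),
\]
valid for every $x\in\XX$. Plugging this into the expression above and swapping the (finite) sums with $\E_{q'}$ gives
\[
	\Lambda\tilde K_{q'}(\bar x,y)
	\;=\;\sum_{\bar y\in\bar{\mathcal X}}\mu_{A(\bar y)}(y)
	\sum_{x\in\XX}\mu_{A(\bar x)}(x)\,\P_{q'}\!\left[\rho'_x\in A(\bar y)\right].
\]

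Next, the key identification comes from Wilson's algorithm started at $x$: the root of the tree of $\Phi'$ containing $x$ has the same law as $X(T_{q'})$ under $P_x$, that is
\[
	\P_{q'}\!\left[\rho'_x\in A(\bar y)\right]
	\;=\;P_x\!\left[X(T_{q'})\in A(\bar y)\right].
\]
Averaging over $x$ weighted by $\mu_{A(\bar x)}(x)$ then yields exactly
\[
	\sum_{x\in\XX}\mu_{A(\bar x)}(x)\,P_x\!\left[X(T_{q'})\in A(\bar y)\right]
	\;=\;P_{\mu_{A(\bar x)}}\!\left[X(T_{q'})\in A(\bar y)\right]
	\;=\;\bar P(\bar x,\bar y),
\]
by the definition of $\bar P$ given in the statement. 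Reassembling, one obtains $\Lambda\tilde K_{q'}(\bar x,y) = \sum_{\bar y}\bar P(\bar x,\bar y)\,\mu_{A(\bar y)}(y) = \bar P\Lambda(\bar x,y)$, which is the desired identity.

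There is essentially no obstacle: the only point requiring care is to keep straight the roles of the two independent forests, using $\Phi$ only to define the (frozen) partition underlying $\Lambda$ and $\bar P$, while the randomness of $\Phi'$ is precisely what the preceding lemma converts into $K_{q'}$ via Wilson's algorithm. Once this bookkeeping is in place, the proof is a short manipulation.
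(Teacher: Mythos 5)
Your proof is correct and follows essentially the same route as the paper's: both decompose $\mu_{A(\rho'_x)}$ over the blocks of $\mathcal A(\Phi)$, swap the finite sum with $\E_{q'}$, and use Wilson's algorithm started at $x$ to identify $\P_{q'}[\rho'_x \in A(\bar y)]$ with $P_x[X(T_{q'})\in A(\bar y)]$, which after the $\mu_{A(\bar x)}$-average gives $\bar P(\bar x,\bar y)$. The only cosmetic difference is that the paper first writes out $\tilde K_{q'}(x,y)$ alone and then left-multiplies by $\Lambda$, whereas you insert the $\Lambda$-average from the outset; the bookkeeping point you flag (freezing $\Phi$, integrating only over $\Phi'$) is exactly the one the paper's ``$\P_q$ a.s.'' is encoding.
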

\begin{proof} $\P_q$ a.s.,  for any   $x, y \in \XX$, 
 \[ \tilde{K}_{q'}(x,y) = \E_{q'} \cro{\mu_{A(\rho'_x)}(y)}
 = \sum_{\by \in \bX} \mu_{A(\by)}(y) \P_{q'} \cro{\rho'_x \in A(\by)}
 = \sum_{\by \in \bX} \nu_{\by}(y) P_x \cro{X(T_{q'}) \in A(\by)} \, .
 \]
 Hence,  $\P_q$ a.s., for any $\bx \in \bX$, and $y \in \XX$,
\[
\nu_{\bx} \tilde{K}_{q'} (y)= \sum_{x \in \XX}  \sum_{\by \in \bX} \nu_{\bx}(x)  \nu_{\by}(y)
P_x \cro{X(T_{q'}) \in A(\by)}
= \sum_{\by \in \bX}  \nu_{\by}(y) P_{\nu_{\bx}} \cro{X(T_{q'}) \in A(\by)}
= \bP \Lambda(\bx,y)
\, .
\]
\end{proof}
Therefore, $\P_q$ a.s., for any $\bx \in \bX$,
\begin{align*}
d_{TV}(\Lambda K_{q'}(\bx,\cdot), \bP  \Lambda (\bx,\cdot)  )
& = d_{TV}(\Lambda K_{q'}(\bx,\cdot), \Lambda \tilde{K}_{q'}(\bx,\cdot) ) 
\\
& \leq   \sum_{x \in \XX}  \nu_{\bx}(x)  \,  d_{TV}(K_{q'}(x,\cdot),  \tilde{K}_{q'}(x,\cdot))
\\
& \leq  \sum_{x \in \XX}  \nu_{\bx}(x) \E_{q'}\cro{d_{TV}(\mu_{A'(x)\cap A(\rho'_x)},\mu_{A(\rho'_x)})} \, .
\end{align*}
When $B$ is a subset of $C$, one has $d_{TV}(\mu_B,\mu_C) = \mu_C(B^c)$. This
yields
\[ 
d_{TV}(\Lambda K_{q'}(\bx,\cdot), \bP  \Lambda (\bx,\cdot)  ) 
\leq  \sum_{x \in \XX}  \nu_{\bx}(x)  \E_{q'}\cro{ \mu_{A(\rho'_x)}(A'(x)^c)}
=  \sum_{x \in \XX}  \nu_{\bx}(x)   \E_{q'}\cro{\P_q\cro{\rho_{\rho'_x} \notin A'(x)|\AA(\Phi)}}
\]
Note that 
\[ \sum_{\bx \in \bX}  \nu_{\bx}(x) = \sum_{\bx \in \bX}  \frac{\mu(x)}{\mu(A(\bx))} \ind_{A(\bx)}(x)
=  \sum_{\bx \in \bX}  \frac{\mu(x)}{\mu(A(x))} \ind_{A(\bx)}(x) 
= \frac{\mu(x)}{\mu(A(x))}  \sum_{\bx \in \bX}  \ind_{A(\bx)}(x) 
=  \frac{\mu(x)}{\mu(A(x))}  
\, .
\]
Summing on $\bx$ and integrating w.r.t. $\E_q$, leads to 
\[ 
\E_q\cro{\sum_{\bx \in \bX} d_{TV}(\Lambda K_{q'}(\bx,\cdot), \bP  \Lambda (\bx,\cdot)  ) }
\leq 
\sum_{x \in \XX} \E_{q,q'}\cro{\mu_{A(x)}(x) \ind_{A'(x)^c}(\rho_{\rho'_x})}
\, .
\]
Let $p\geq1$ and $p^*$ its conjugate exponent. Using H\"older's inequality, we get
\begin{align*}
\E_q\cro{\sum_{\bx \in \bX} d_{TV}(\Lambda K_{q'}(\bx,\cdot), \bP  \Lambda (\bx,\cdot)  ) }
& \leq 
\pare{\sum_{x \in \XX} \E_{q,q'}\cro{ \mu_{A(x)}(x)^p}}^{1/p} 
\pare{\sum_{x \in \XX} \P_{q,q'}\cro{\rho_{\rho'_x} \notin A'(x)}}^{1/p^*}
\\
& \leq \pare{\sum_{x \in \XX} \E_{q}\cro{ \mu_{A(x)}(x)}}^{1/p} 
\pare{\sum_{x \in \XX} \P_{q,q'}\cro{\rho_{\rho'_x} \notin A'(x)}}^{1/p^*}
\, . 
\end{align*}
Note that $\sum_{x \in \XX} \E_{q}\cro{ \mu_{A(x)}(x)} = \sum_{x \in \XX} \P_q\cro{\rho_x=x}
= \sum_{x \in \XX} \P_q\cro{x\in \rho(\Phi)} = \E_{q}\cro{ \abs{\rho(\Phi)}}$. Therefore,
\begin{equation}
\label{TVmeta.ineq}
\E_q\cro{\sum_{\bx \in \bX} d_{TV}(\Lambda K_{q'}(\bx,\cdot), \bP  \Lambda (\bx,\cdot)  ) }
\leq \pare{ \E_{q}\cro{ \abs{\rho(\Phi)}}}^{1/p} 
\pare{\sum_{x \in \XX} \P_{q,q'}\cro{\rho_{\rho'_x} \notin A'(x)}}^{1/p^*}
\, .
\end{equation}

To conclude the proof of our theorem
we evaluate $\P_{q,q'}\cro{\rho_{\rho'_x} \notin A'(x)}$
for~$x$ any given point in~${\mathcal X}$.
\begin{lem} For any $x \in \XX$,  let  $ \Gamma'_x$ be the path  going from $x$ to $\rho'_x$ in 
$\Phi'$. Then, 
\[ \P_{q,q'}\cro{\rho_{\rho'_x} \notin A'(x)} \leq \frac{q'}{q} \E_{q'}\cro{\abs{\Gamma'_x}} 
\, . 
\]
\end{lem}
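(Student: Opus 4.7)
\medskip
\noindent\textbf{Proof plan.}
The first observation is that $\Gamma'_x \subseteq A'(x)$, since the path $\Gamma'_x$ from $x$ to $\rho'_x$ in $\Phi'$ lies entirely inside the $\Phi'$-tree $A'(x)$ of $x$. This gives the inclusion $\{\rho_{\rho'_x} \notin A'(x)\}\subseteq\{\rho_{\rho'_x}\notin\Gamma'_x\}$ and reduces the problem to proving
\[
\P_{q,q'}\bigl[\rho_{\rho'_x} \notin \Gamma'_x\bigr] \leq \frac{q'}{q}\,\E_{q'}\bigl[|\Gamma'_x|\bigr].
\]

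The main idea is then to realise both $\rho'_x$ and $\rho_{\rho'_x}$ from a \emph{single} trajectory of~$X$. Let $X$ start at~$x$ and let $T_{q'}$, $T_q$ be two independent exponential times of rates $q'$ and $q$, independent of~$X$. Running Wilson's algorithm for~$\Phi'$ from~$x$, we identify $\rho'_x=X(T_{q'})$ and $\Gamma'_x$ with the loop-erasure of $X\vert_{[0,T_{q'}]}$. Since $\Phi$ is independent of $\Phi'$ and Wilson's algorithm for~$\Phi$ at~$\rho'_x$ reads off the root as the position of an independent walk started from~$\rho'_x$ and killed at rate~$q$, the strong Markov property of~$X$ at time~$T_{q'}$ allows us to use the time-shifted trajectory of the same~$X$ for this walk, so that $\rho_{\rho'_x}$ has the same joint law with $(\Gamma'_x,\rho'_x)$ as $X(T_{q'}+T_q)$.

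The target inequality thus becomes the statement that $\P\bigl[X(T_{q'}+T_q)\notin \mathrm{LE}(X\vert_{[0,T_{q'}]})\bigr]$ is bounded by $(q'/q)\,\E[|\mathrm{LE}(X\vert_{[0,T_{q'}]})|]$. I would prove it by writing both sides as double integrals against the joint density of $(T_{q'},T_q)$ and the law of~$X$, and comparing the integrands along the trajectory. The identity $\E_{q'}[|\Gamma'_x|]=\sum_{y\neq x}\P_{q'}[y\in\Gamma'_x]$, combined with the characterisation via Wilson of $y\in\Gamma'_x$ in terms of hitting and escape events of a walk killed at rate~$q'$, gives an expansion of the right-hand side; on the other hand, the memoryless property of~$T_q$ allows one to write the left-hand side by integrating the event $\{X(T_{q'}+s)\notin\mathrm{LE}(X\vert_{[0,T_{q'}]})\}$ against $qe^{-qs}\,ds$. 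The factor $q'/q$ should then emerge as the ratio of the two exponential rates after the Fubini exchange.

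The hard part, which I expect to be the main technical obstacle, is exactly this matching of the integrands: it requires tracking along the walk's trajectory the interplay between loop creation (which contributes to $|\Gamma'_x|$ with weight $1/q'$) and the extension-phase escape from the current loop-erasure (which is killed at rate~$q$), and showing vertex-by-vertex that the escape probability is dominated by the loop-erasure contribution times $q'/q$. A careful combinatorial argument based on cycle-popping, or equivalently on a direct computation with the Green's kernel $(qI-\LL)^{-1}$, should close the estimate.
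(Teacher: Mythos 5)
Your reduction to the event $\{\rho_{\rho'_x}\notin\Gamma'_x\}$ loses too much, and the inequality you propose to prove after your first paragraph,
\[
\P_{q,q'}\bigl[\rho_{\rho'_x} \notin \Gamma'_x\bigr] \leq \frac{q'}{q}\,\E_{q'}\bigl[|\Gamma'_x|\bigr],
\]
is in general false. The crucial extra ingredient that the paper retains, and that your inclusion step discards, is that $\rho_{\rho'_x}\notin A'(x)$ requires the continuation of Wilson's algorithm for $\Phi'$ started at $v=\rho_{\rho'_x}$ to be \emph{killed at rate $q'$ before it hits} $\gamma'=\Gamma'_x$; this extra killing probability (not merely the event $v\notin\gamma'$) is precisely what makes the final estimate close.

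Here is a concrete counterexample to your reduced inequality. Take $\XX=\{a,b\}$ with $w(a,b)=w(b,a)=1$ and $x=a$, so $\alpha=1$ and $K_{q'}(a,b)=1/(q'+2)$. Then $\E_{q'}[|\Gamma'_a|]=\P_{q'}[\rho'_a=b]=1/(q'+2)$, while with your single-trajectory coupling
\[
\P_{q,q'}\bigl[\rho_{\rho'_a}\notin\Gamma'_a\bigr]
= P_a\bigl(X(T_{q'})=a\bigr)\,P_a\bigl(X(T_q)=b\bigr)
= \frac{q'+1}{q'+2}\cdot\frac{1}{q+2}\,.
\]
Your target bound is $\frac{q'}{q(q'+2)}$, and the comparison reduces to $q\le 2q'$, which fails whenever $q>2q'$. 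By contrast, the true quantity $\P_{q,q'}[\rho_{\rho'_a}\notin A'(a)]$ carries the additional factor $q'/(q'+1)$ accounting for the third walk being killed before returning to $a$, giving $\frac{q'}{(q'+2)(q+2)}\le \frac{q'}{q(q'+2)}$ always. Beyond this, the remainder of your sketch is not a proof: you explicitly flag the integrand-matching as the main obstacle and leave it open, whereas the paper's argument does not proceed by a pointwise comparison along the trajectory at all. It conditions on $(\gamma',\rho'_x)$, expresses the escape event through the Green function $G_q$ and the return-time decomposition at $\gamma'$, and crucially performs a bijection on spanning forests (reversing the edges of $\gamma'$ from $z$ to $y$ and adding the edge $(z,u)$) together with a reversibility identity to control the forest weights; none of these steps appears in your plan.
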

\begin{proof}
To decide wether $\rho_{\rho'_x}$ is in  $A'(x)$ or not, we do the following construction:
\begin{enumerate}
\item We begin the construction of $\Phi'$ using Wilson's algorithm
starting from $x$. Thus, we let evolve the Markov process starting from $x$ until an exponential time of parameter 
$q'$, and erase the loop. The result is an oriented path $\gamma'$ (= $\Gamma'_x$) 
without loops from $x$ to a point $y$ ($=\rho'_x$).
\item We go on with the construction of $\Phi$ with Wilson's algorithm starting from $y$. We let evolve the Markov 
process starting from $y$ until an exponential time $T_q$ of parameter $q$. The Markov process stops a a point
$v$ ($=\rho_{\rho'_x}$).
\item Finally, we continue the construction of $\Phi'$ using Wilson's algorithm starting from $v$. We let evolve the Markov process starting from $v$, and we stop it after an exponential time $T_{q'}$ of parameter $q'$, or when it reaches 
the already constructed path  $\gamma'$. At this point, we are able to decide wether $\rho_{\rho'_x}$ is in  $A'(x)$ or not,
since $\rho_{\rho'_x} \in A'(x)$ if and only if $T_{q'}$ is bigger than the hitting time of $\gamma'$.
\end{enumerate}
Using this construction, 
we get that for any self-avoiding path $\gamma'$ from $x$ to $y$, 
\[ \P_{q,q'}\cro{\rho_{\rho'_x} \notin A'(x)| \Gamma'_x= \gamma'; \rho'_x=y}
= P_y \cro{T_{q'}<H_{\gamma'}\circ \theta_{T_q}} \, ,
\]
where $\theta_t$ denotes the time shift.
Recall that $\tau_1$ is the first time of the clock process
on which $X$ is build from $\hat X$,
and let $S_i$ be the successive return times to $\gamma'$:
\[  S_0=0 \, , \, \, S_1=\inf \acc{t \geq \tau_1; X(t) \in \gamma'}=H_{\gamma'}^+ \, \, , \, \,
S_{i+1}= S_i + S_1 \circ \theta_{S_i} \, .
\]
Then,
\[ 
P_y \cro{T_{q'}<H_{\gamma'}\circ \theta_{T_q}}
= \sum_{i=0}^{\infty} P_y \cro{S_i \leq T_q < S_{i+1} ; T_{q'}< H_{\gamma'}\circ \theta_{T_q}}
\, .
\]
Now, if $S_i \leq T_q < S_i + \tau_1\circ \theta_{S_i}$, $X(T_q) \in \gamma'$, and $H_{\gamma'}\circ \theta_{T_q}
= 0 < T_{q'}$. If $T_q \geq S_i + \tau_1\circ \theta_{S_i}$, and $T_q < S_{i+1}$, $X(T_q) \notin \gamma'$, and
$ H_{\gamma'}\circ \theta_{T_q}= S_{i+1}-T_q$. Therefore, 
\begin{align*}
P_y \cro{T_{q'}<H_{\gamma'}\circ \theta_{T_q}}
& = \sum_{i=0}^{\infty} P_y \cro{S_i + \tau_1\circ \theta_{S_i} \leq T_q < T_{q' }+ T_q < S_{i+1}}
\\
& = \sum_{i=0}^{\infty} \sum_{z \in \gamma'} P_y\cro{S_i \leq T_q; X(S_i)=z} 
P_{z}\cro{\tau_1 \leq T_q < T_{q' }+ T_q<  H_{\gamma'}^+} , 
\end{align*}
using Markov property at time $S_i$. Set $\tilde{G}_q(y,z,\gamma')= E_y\cro{\sum_{i=0}^{+\infty} \ind_{S_i \leq T_q; X(S_i)=z}}$. Since $z\in \gamma'$, 
$\tilde{G}_q(y,z,\gamma')$ is the mean number of visits to the point $z$ up to time $T_q$. We have obtained that 
\[
P_y \cro{T_{q'}<H_{\gamma'}\circ \theta_{T_q}}
= \sum_{z \in \gamma'} \tilde{G}_q(y,z,\gamma') P_{z}\cro{\tau_1 \leq T_q < T_q + T_{q'} < H_{\gamma'}^+} \, . 
\]
We now use Markov property at time $\tau_1$ to write
\begin{align*}
P_{z}\cro{\tau_1 \leq T_q < T_q + T_{q'} < H_{\gamma'}^+} 
& = \sum_{u \notin \gamma'} P_z\cro{\tau_1 \leq T_q, X(\tau_1)=u} P_u(T_q < T_{q'}+T_q < H_{\gamma'})
\\ & \leq  \sum_{u \notin \gamma'} \frac{\alpha}{q+\alpha} P(z,u)  P_u(T_{q'} < H_{\gamma'})
\\ & = \sum_{u \notin \gamma'} \frac{1}{q+\alpha} w(z,u) \P_{q'}\cro{\rho'_u \neq y \bigm| {\Gamma'_x=\gamma'}} \, , 
\end{align*}
using that $\alpha P(z,u)=\LL(z,u)=w(z,u)$ for $z \neq u$. 
Integrating over $\gamma'$ and $y$, we are led to
\begin{align*}
\P_{q,q'}\cro{\rho_{\rho'_x} \notin A'(x)}
& \leq \sum_{y \in \XX} \sum_{\gamma: x \leadsto y} \sum_{z \in \gamma}  \sum_{u \notin \gamma} 
\frac{\tilde{G}_q(y,z,\gamma)}{q+\alpha} w(z,u)   \P_{q'}\cro{\rho'_u \neq y; \Gamma'_x=\gamma; \rho'_x=y}
\end{align*}
where the sum over $\gamma'$ is the sum on all self-avoiding paths going
from~$x$ to~$y$. Now, introducing for any such path~$\gamma$
\[ \FF_1(y,\gamma,u) := \acc{ \phi \, s.o.f. \, ;  \, y \in \rho(\phi),  \gamma \subset \phi,  \rho_u \neq y }
\, ,
\]
this can be rewritten, with $w(\phi) = \prod_{e \in \phi} w(e)$, as 
\begin{align*}
\P_{q,q'}\cro{\rho_{\rho'_x} \notin A'(x)}
& = \sum_{y \in \XX} \sum_{\gamma: x \leadsto y} \sum_{z \in \gamma}  \sum_{u \notin \gamma}  
\sum_{\phi \in {\mathcal F}_1(y,\gamma,u)}\frac{\tilde{G}_q(y,z,\gamma)}{q+\alpha} w(z,u) \frac{(q')^{\abs{\rho(\phi)}} w(\phi)}{Z(q')} 
\,.
\end{align*}
\begin{lem}
Let $G_q(y,z)=E_y\cro{\int_0^{T_q} \ind_{X(s) = z} \, ds}$.
Then $G_q(y,z)=\tilde{G}_q(y,z,\gamma)/(q+\alpha)$
for any self-avoiding path $\gamma$ that contains $z$ and goes from~$x$ to~$y$.
\end{lem}
\begin{proof}
Let $V_i$ be the successive return times to $z$:
\[  V_0=0 \, , \, \, V_1=\inf \acc{t \geq \tau_1; X(t)  = z} \, \, , \, \,
V_{i+1}= V_i + V_1 \circ \theta_{V_i} \, .
\]
Then $\tilde{G}_q(y,z,\gamma)= \delta_y(z)+ \sum_{i=1}^{+\infty} E_y[ \ind_{V_i \leq T_q}]$. Moreover, using Markov's property at 
time $V_i$, 
\begin{align*}
G_q(y,z) & = \sum_{i=0}^{\infty} E_y\cro{\int_{V_i}^{V_{i+1}} \ind_{T_q \geq s} \ind_{X(s)=z} \, ds}
 \\
& = \sum_{i=0}^{\infty} E_y\cro{ \ind_{V_i \leq T_q} E_{X(V_i)}\cro{\int_{0}^{V_{1}}  \ind_{T_q \geq s}  \ind_{X(s)=z}\, ds
}} \\
& = E_y\cro{\int_{0}^{V_{1}}  \ind_{T_q \geq s}  \ind_{X(s)=z} \, ds}+ 
\sum_{i=1}^{\infty}E_y\cro{ \ind_{V_i \leq T_q}} E_{z}\cro{\int_{0}^{V_{1}}  \ind_{T_q \geq s}  \ind_{X(s)=z} \, ds}
\\
&=  \pare{ \delta_y(z) + \sum_{i=1}^{\infty}E_y\cro{ \ind_{V_i \leq T_q}} } 
E_{z}\cro{\int_{0}^{V_{1}}  \ind_{T_q \geq s}  \ind_{X(s)=z} \, ds}
\\
&= \tilde{G}_q(y,z,\gamma) E_{z}\cro{\int_{0}^{V_{1}}  \ind_{T_q \geq s}  \ind_{X(s)=z} \, ds} \, . 
\end{align*}
Now, $ E_{z}\cro{\int_{0}^{V_{1}}  \ind_{T_q \geq s}  \ind_{X(s)=z} \, ds} =  
E_{z}\cro{\int_{0}^{\tau_{1}}  \ind_{T_q \geq s} \, ds}= E\cro{\tau_1 \wedge T_q} = \frac{1}{q+\alpha}
$.
\end{proof}
Hence,
\[ \P_{q,q'}\cro{\rho_{\rho'_x} \notin A'(x)}
\leq \sum_{y \in \XX} \sum_{\gamma: x \leadsto y} \sum_{z \in \gamma}  \sum_{u \notin \gamma}  
\sum_{\phi \in \FF_1(y,\gamma,u)} G_q(y,z)  w(z,u) \frac{(q')^{\abs{\rho(\phi)}} w(\phi)}{Z(q')} \, .
\] 
 We fix $y$, $\gamma$ and $z$ and want to perform  the summations over 
$u$ and $\phi$. With any pair $(u, \phi)$, with $u \notin \gamma$ and $\phi \in \FF_1(y,\gamma,u)$, we associate a new
forest $\tilde{\phi}=\tilde{\phi}(u,\phi)$ in the following way:
\begin{enumerate}
\item we reverse the edges from $z$ to $y$ along $\gamma$;
\item we add the edge $(z,u)$.
\end{enumerate}
The forest $\tilde{\phi}$ is such that:
\begin{itemize}
\item $\abs{\rho(\tilde{\phi})}= \abs{\rho(\phi)}-1$;
\item $z \notin \rho(\tilde{\phi})$. 
\item the piece $\gamma_{x \leadsto z}$ of the path $\gamma$ going from $x$ to $z$ belongs to 
$ \tilde{\phi}$;
\item the path $\overleftarrow{\gamma}_{y \leadsto z}$ consisting of the reversed path  $\gamma$  from $z$ to $y$,
belongs to $ \tilde{\phi}$. 
\end{itemize}
Using reversibility,  one has 
 $  \mu(z) \prod_{e \in \gamma_{z \leadsto y}} w(e)  = \mu(y) \prod_{e \in \overleftarrow{\gamma}_{y \leadsto z}} w(e) $,
  and
 \[ w(z,u) w(\phi)= w(\tilde{\phi}) \mu(y)/\mu(z)
 \, .
 \]
 Set $\FF_2(y,z,\gamma)=\acc{ \phi \, s.o.f. \, ;  \, z \notin \rho({\phi}), \gamma_{x \leadsto z} \subset \phi, 
 \overleftarrow{\gamma}_{y \leadsto z} \subset \phi}$.
 Note that the function 
 \[ (u,\phi) \in \acc{(u,\phi), u \notin \gamma, \phi \in \FF_1(y,\gamma,u)} 
 \mapsto \tilde{\phi} \in \FF_2(y,z,\gamma)
 \]
  is one to one. Indeed, given $\tilde{\phi}$ in $\FF_2(y,z,\gamma)$, 
 $u$ is the ``ancestor'' of $z$ in $\tilde{\phi}$, and once we know $u$, $\phi$ is obtained by cutting the edge $(z,u)$, and 
 by reversing the path $\overleftarrow{\gamma}_{y \leadsto z}$. Therefore, we obtain
\begin{align*}
 \sum_{u \notin \gamma}  
\sum_{\phi \in \FF_1(y,\gamma,u)} G_q(y,z) w(z,u) \frac{(q')^{\abs{\rho(\phi)}} w(\phi)}{Z(q')}
& = \sum_{\phi \in \FF_2(y,z,\gamma)} G_q(y,z)  \frac{\mu(y)}{\mu(z)}
\frac{(q')^{\abs{\rho(\phi)}+1} w(\phi)}{Z(q')}
\\
& =   \sum_{\phi \in \FF_2(y,z,\gamma)} G_q(z,y) 
\frac{(q')^{\abs{\rho(\phi)}+1} w(\phi)}{Z(q')}
\, ,
\end{align*}
by reversibility. At this point, we are led to
\[ \P_{q,q'}\cro{\rho_{\rho'_x} \notin A'(x)}
\leq \sum_{y \in \XX} \sum_{\gamma: x \leadsto y} \sum_{z \in \gamma}  
\sum_{\phi \in \FF_2(y,z,\gamma)} G_q(z,y)  \frac{(q')^{\abs{\rho(\phi)}+1} w(\phi)}{Z(q')} \, .
\] 
We now perform the summations over $z$ and $\gamma$ and $\phi$, $y$ being fixed. Note that 
if $\phi \in \FF_2(y,z,\gamma)$ for some $z$ and $\gamma$, $x$ and $y$ are in the same tree
($t_x=t_y$ using the notations of 
Section \ref{notation.sub}), and $z$ is their first common ancestor $a(x,y)$ in that tree. Let 
us then denote 
\[ \FF_3(y,x) = Ê\acc{ \phi \, s.o.f. \, ;  \, t_x=t_y, a(x,y) \notin \rho(\phi) } \, . 
\]
Then, 
\[ \cup_{\gamma: x \leadsto y} \cup_{z \in \gamma} \FF_2(y,z,\gamma) 
\subset \FF_3(y,x) \, .
\] 
In addition, given a forest $\phi \in \FF_3(y,x)$, there is a unique $\gamma: x \leadsto y$, and $z \in \gamma$
such that 	$\phi \in  \FF_2(y,z,\gamma)$: $z$ is the first common ancestor $a(x,y)$ of $x$ and $y$, whereas $\gamma$ is
the concatenation of the path going from $x$ to $a(x,y)$ and the reversed path from  $y$  to $a(x,y)$. 
Therefore, 
\[ 
\sum_{\gamma: x \leadsto y} \sum_{z \in \gamma}  
\sum_{\phi \in \FF_2(y,z,\gamma)} G_q(z,y)  \frac{(q')^{\abs{\rho(\phi)}+1} w(\phi)}{Z(q')} 
= \sum_{\phi \in \FF_3(y,x)} G_q(a(x,y),y) \frac{(q')^{\abs{\rho(\phi)}+1} w(\phi)}{Z(q')} \, .
\]
It remains to sum over $y$. When moving $y$ in $t_x$, $a(x,y)$ moves along the path $\gamma_x$ going 
from $x$ to the root of $t_x$. Hence, 
\begin{align*}
\sum_{y \in \XX} \, \sum_{\phi \in \FF_3(y,x)}  G_q(a(x,y),y) \frac{(q')^{\abs{\rho(\phi)}+1} w(\phi)}{Z(q')} 
& = \sum_{\phi \, s.o.f.} \, \sum_{z \in \gamma_x, z \neq \rho_x} \, \sum_{y \in t_x; a(x,y)=z} G_q(z,y) 
\frac{(q')^{\abs{\rho(\phi)}+1} w(\phi)}{Z(q')} \, .
\\
& \leq \frac{q'}{q}  \sum_{\phi \, s.o.f.} \, \sum_{z \in \gamma_x, z \neq \rho_x} \pi_{q'}(\phi)
\\
& \leq \frac{q'}{q} \E_{q'}\cro{\abs{ \Gamma'_x}}
\, .
\end{align*}
\end{proof} 

\section{Proof of Theorem~\ref{cup}} \label{chopin}

Let us first rewrite our approximate solutions of Equation~\eqref{meta.eq}
with error terms.
There are signed measures $\epsilon_{\bar x, q'}$
such that, for all $\bar x$ in $\bar{\mathcal X}$,
$$
	\mu_{A(\bar x)}K_{q'} = \sum_{\bar y \in \bar{\mathcal X}} P_{\mu_{A(\bar x)}}\!\!\left(X(T_{q'}) \in A(\bar y)\right) \mu_{A(\bar y)}  + \epsilon_{\bar x, q'}.
$$
Let us now apply the ```low-pass filter'' $MW_m$ on both sides of the equations.
On the one hand, $K_{q'}$ and $MW_m$ commute.
On the other hand, our linear independence (i.e. finite squeezing) hypothesis implies that the $\epsilon_{\bar x, q'} MW_m$
are linear combinations of the $\mu_{A(\bar x)}MW_m$. 
Indeed, since the image ${\rm im}(MW_m)$ of $MW_m$ is a vector space of dimension $m$
that contains the $m$ linearly independent $\nu_{\bar x}$, the latter should span ${\rm im}(MW_m)$.
We then get, by using the notation of the proof of Proposition~\ref{flatness.prop},
$$
	\nu_{\bar x}K_{q'}
	= \sum_{\bar y \in \bar{\mathcal X}} \left(
		P_{\mu_{A(\bar x)}}\!\!\left(X(T_{q'}) \in A(\bar y)\right) + \langle \tilde \nu_{\bar y}, \epsilon_{\bar x, q'}MW_m\rangle^*
	  \right) \nu_{\bar y}.
$$
Now, when $q'$ goes to 0, $P_{\mu_{A(\bar x)}}\!\!\left(X(T_{q'}) \in A(\bar y)\right)$ converges to $\mu(A(\bar y)) > 0$,
and, by Theorem~\ref{TVmeta.theo}, $\epsilon_{\bar x, q'}$ goes to zero.
Since our $\nu_{\bar x}$ do not depend on $q'$,
this concludes the proof of the theorem.

Let us list what would be needed to give quantitative bounds on $q'$ 
to ensure that we can build in this way exact solutions of~\eqref{meta.eq}.
We would need:
\begin{enumerate}
\item upper bounds on the $\epsilon_{\bar x, q'}$;
\item upper bounds on the $\|\tilde \nu_{\bar x}\|$;
\item lower bounds on the $P_{\mu_{A(\bar x)}}\!\!\left(X(T_{q'}) \in A(\bar y)\right)$.
\end{enumerate}
The latter are out of reach in such a general framework,
the first ones are provided by Theorem~\ref{TVmeta.theo},
the second ones would be a consequence of upper bounds on the squeezing.
This is the reason why we introduce the squeezing to measure joint overlap.
We note that given Proposition~\ref{rootsgivenpartition.prop} and Equation~\eqref{flatDF.eq}
in Theorem~\ref{cornet},
we are not so far of getting such bounds. But no convexity inequality leads here to the conclusion.

\appendix
\section{Proof of proposition~\ref{poireau}}
If such random variables exist then, for all $\bar x$, $\bar y \neq \bar x$ and $y$,
\begin{align*}
	P_{\nu_{\bar x}}\left(
		T_{\bar x} = 1, \bar Y_{\bar x}  = \bar y \Bigm| \hat X(1) = y
	\right)
 	& = \frac{
		P_{\nu_{\bar x}}\left(
			T_{\bar x} = 1, \bar Y_{\bar x}  = \bar y, \hat X(1) = y
		\right)
	}{(\nu_{\bar x}P)(y)} \\
	& = \frac{
		(1 - \bar P(\bar x, \bar x))\frac{\bar P(\bar x, \bar y)}{1 - \bar P(\bar x, \bar x)} \nu_{\bar y}(y)
	}{(\nu_{\bar x}P)(y)} \\
	& = \frac{
		{\bar P(\bar x, \bar y)}\nu_{\bar y}(y)
	}{(\nu_{\bar x}P)(y)} 
	\,.
\end{align*}
By summing on $\bar y$ we get 
\begin{align*}
 	P_{\nu_{\bar x}}\left(
		T_{\bar x} = 1 \Bigm| \hat X(1) = y
	\right)
	= \frac{
		(\nu_{\bar x}P)(y) - \bar P(\bar x, \bar x) \nu_{\bar x}(y)
	}{ 
		(\nu_{\bar x} P)(y)
	}
	= 1 - \frac{\bar P(\bar x, \bar x) \nu_{\bar x}(y)}{(\nu_{\bar x} P)(y)}
	\,.
\end{align*}
We also have
\begin{align*}
	P_{\nu_{\bar x}}\left(
		\bar Y_{\bar x} = \bar y
		\Bigm| \hat X(1) = y, T_{\bar x} = 1
	\right)
	& = \frac{
		P_{\nu_{\bar x}}\left(
			\bar Y_{\bar x} = \bar y,  T_{\bar x} = 1
			\Bigm| \hat X(1) = y
		\right)
	}{
		P_{\nu_{\bar x}}\left(
			T_{\bar x} = 1
			\Bigm| \hat X(1) = y
		\right)
	}\\
	& = \frac{
		\bar P(\bar x, \bar y) \nu_{\bar y}(y)
	}{
		(\nu_{\bar x}P)(y) - \bar P(\bar x, \bar x)\nu_{\bar x}(y)
	}
	\,.
\end{align*}

We are then led to build $T_{\bar x} \geq 1$ and $\bar Y_{\bar x}$
in the following way.
\begin{enumerate}
\item At $t = 1$ we set $T_{\bar x} = 1$ with probability
    $1 - \bar P(\bar x, \bar x)\nu_{\bar x}(\hat X(1)) / (\nu_{\bar x}P)(\hat X(1))$
	by using a uniform random variable $U_1$ which is independent of $\hat X$
	(it holds $\bar P(\bar x, \bar y)\nu_{\bar x}(y) / (\nu_{\bar x}P)(y) \leq 1$
    for all $y$ in ${\mathcal X}$ by Equation~\eqref{banzai}).
\item If we just set $T_{\bar x} = 1$ we then set $\bar Y_{\bar x} = \bar y \neq \bar x$ with a probability given by the ratio
	$\bar P(\bar x, \bar y) \nu_{\bar y}(\hat X(1))/[(\nu_{\bar x}P)(\hat X(1)) - \bar P(\bar x, \bar x)\nu_{\bar x}(\hat X(1))]$
	by using a uniform random variable $U'_1$ that is independent of $U_1$ and $\hat X$.
	(Once again~\eqref{banzai} ensures that these are positive quantities summing to one.)
\item If for all $s < t$ we did not decide to set $T_{\bar x} = s$ then we set in the same way $T_{\bar x} = t$ 
	with probability $1 - \bar P(\bar x, \bar x)\nu_{\bar x}(\hat X(t)) / (\nu_{\bar x}P)(\hat X(t))$,
	in which case we set $\bar Y_{\bar x} = \bar y \neq \bar x$ with probability
	$\bar P(\bar x, \bar y) \nu_{\bar y}(\hat X(t))/[(\nu_{\bar x}P)(\hat X(t)) - \bar P(\bar x, \bar x)\nu_{\bar x}(\hat X(t))]$.
	This is naturally done by using uniform random variable that are independent
	of $\hat X$ and $U_1$, $U_1'$, $U_2$, $U_2'$, \dots, $U_{t - 1}$,~$U_{t - 1}'$.
\end{enumerate}
At this point,
the key property to check is the stationarity of $\nu_{\bar x}$ up to $T_{\bar x}$.
To this end it suffices to check Equation~\eqref{mucca} with $t = 1$.
And one has
\begin{align*}
	P_{\nu_{\bar x}}\left(\hat X(1) = y \Bigm| T_{\bar x} > 1\right)
	& = \frac{P_{\nu_{\bar x}}\left(\hat X(1) = y,  T_{\bar x} > 1\right)}{P_{\nu_{\bar x}}\left(T_{\bar x} > 1\right)}\\
	& = \frac{P_{\nu_{\bar x}}\left(\hat X(1) = y\right) - P_{\nu_{\bar x}}\left(\hat X(1) = y,  T_{\bar x} = 1\right)}
	{1 - P_{\nu_{\bar x}}\left(T_{\bar x} = 1\right)}\\
	& = \frac{{\nu_{\bar x}}P(y) - {\nu_{\bar x}}P(y)\left(1 - \frac{\bar P(\bar x, \bar x)\nu_{\bar x}(y)}{\nu_{\bar x}P(y)}\right)}
	{1 - \sum_z {\nu_{\bar x}}P(z)\left(1 - \frac{\bar P(\bar x, \bar x)\nu_{\bar x}(z)}{\nu_{\bar x}P(z)}\right)}\\
	& = \frac{\bar P(\bar x, \bar x)\nu_{\bar x}(y)}{1 - \sum_z {\nu_{\bar x}}P(z) + \sum_z \bar P(\bar x, \bar x)\nu_{\bar x}(z)}
	= \nu_{\bar x}(y).
\end{align*}
Points (1)--(5) immediately follow.

\end{document}